\providecommand{\wt}{\widetilde}
\providecommand{\C}{\mathbb C}
\providecommand{\N}{\mathbb N}
\renewcommand{\H}{\mathbb{H}}
\providecommand{\R}{\mathbb R}
\providecommand{\Z}{\mathbb Z}
\newcommand{\norm}[1]{\left|\left| #1 \right|\right|}
\newtheorem{theorem}{Theorem}[section]
\newtheorem{definition}[theorem]{Definition}
\newtheorem{lemma}[theorem]{Lemma}
\newtheorem{prop}[theorem]{Proposition}
\newtheorem{corollary}{Corollary}[theorem]
\newtheorem*{remark}{Remark}
\numberwithin{equation}{section}
\title{Additive Stability of Frames}
\author{Oleg Asipchuk}
\address{Oleg Asipchuk,  Florida International University,
	Department of Mathematics and Statistics,
	Miami, FL 33199, USA}
\email{aasip001@fiu.edu}
\author{Jacob Glidewell}
\address{Jacob Glidewell,  The University of Alabama,
	Department of Mathematics,
	Tuscaloosa, AL 35487-0350, USA}
\email{jbglidewell@crimson.ua.edu}
\author{Luis Rodriguez}
\address{Luis Rodriguez,  Florida International University,
	Department of Mathematics and Statistics,
	Miami, FL 33199, USA}
\email{lrodr073@fiu.edu}
\subjclass[2020]{Primary: 42C15}
\begin{document}
	
\begin{abstract}
	Given a frame in a finite dimensional Hilbert space we construct additive perturbations which decrease the condition number of the frame.  By iterating  this perturbation, we introduce an algorithm that produces a tight frame in a finite number of steps. Additionally, we give sharp bounds on additive perturbations which preserve frames and we study the effect of appending and erasing vectors to a given tight frame. We also discuss under which conditions our finite-dimensional results are extendable to infinite-dimensional Hilbert spaces.
\end{abstract}

\maketitle

\section{Introduction}

Hilbert space frames are powerful mathematical tools that provide a natural extension to the concept of bases. This subject
originated in the work of Duffin and 
Schaeffer \cite{Duffin} and has been greatly studied since the seminal work of  Daubechies, Grossmann, and Meyer \cite{DGM1986}.

\begin{definition}
	A finite or countable family of vectors ${\mathcal F} =\{x_j\}_{j\in J}$ in a Hilbert space $\mathbb{H} $ is a {\bf frame} if there are constants $0<A\le B<\infty$ satisfying:
	\[ A\|x\|^2 \le \sum_{j \in J}|\langle x,x_j\rangle|^2 \le B\|x\|^2,\mbox{ for all }x\in \mathbb{H} .\]
\end{definition}

If $A=B$ this is an {\it A-tight frame} and if $A=B=1$ this is a {\it Parseval frame}.  The largest $A$ and smallest $B$ satisfying
this inequality are called the optimal  {\it lower} (respectively, {\it upper}) {\it frame bounds}.
  The frame is said to be {\it equal-norm} if all elements have the same norm. 

\medskip
Typically, frames are not linearly independent, but linear independence often does not align well with the demands of applied problems; instead, the redundancy makes frames ideal for handling signals and other types of data, as it allows multiple representations of vectors in terms of the frame vectors. 
 
\medskip
Frames are stable, in the sense that a small perturbation  of a frame is still a frame.  This property   allows for accurate and reliable signal recovery even in the presence of noise or other disturbances.  Thus, quantitative estimates of the stability of frames under perturbations are crucial for applications such as signal reconstruction and de-noising.  
 
\medskip
Three  questions naturally arise with regard to the stability of frames:
\begin{enumerate}
    \item How much can a frame be perturbed and still remain a frame?

    \item How much does a frame need to be perturbed to become a tight frame?
    
\item  How does a frame change when more vectors are added?
\end{enumerate}
\noindent The goal of this paper is to give new progress on these questions in applicable ways.

Frame perturbations have been studied in many contexts, most notably with specific applications to Gabor frames,   \cite{Christensen1996,DV2018,SZ2001,SZ2003}, and exponential frames and bases;
%
the Kadec $\frac{1}{4}$-theorem \cite{Kadec}   states that $\{e^{2\pi i\lambda_n x}\}_{n\in \Z}$ is a   basis in $L^2[0,1]$ if $\sup_n|n-\lambda_n|<\frac{1}{4}$. O. Christensen extended this result to frames.  
 See   \cite{ChristensenGabor}  and also   \cite{BR1997,SZ1999}.  

We focus our study on the stability of frames under additive perturbations.  That is, given a frame $\{f_j\}_{  {j\in J}}$ in a Hilbert space ${\mathbb{H}}$ and $\epsilon>0$, we consider  the set $\{f_j+\delta_j\}_ {j\in J}$ where $||\delta_j||<\epsilon$ for each $j$. 
We can think of the   $\delta_j$  as "noise" that is added to the frame.

To our knowledge, no explicit results are available for additive perturbations of bases or frames in finite-dimensional Hilbert spaces. 
 
The Krein-Milman-Rutman theorem in \cite{Young} states that for a basis $\{x_n\}_{n\in\N}$ in a Banach space $X$, there exists $\epsilon_n>0$ for each $n$ such that if $\{y_n\}_{n \in \N}$ is a sequence in $X$ with $\norm{x_n-y_n}<\epsilon_n$, then $\{y_n\}_{n \in \N}$ is also a basis in $X$.  
  If $X$ has infinite dimension,  it follows from   the proof of  the theorem in \cite{Young} that the lower bound of the sequence $\{ \epsilon_n \}_{n\in\N}$ is $\epsilon=0$.  If $X$ is finite-dimensional, we estimate   $\epsilon$ in Proposition \ref{thm: StabilitywithPW} and Corollary \ref{cor: UTFstability}.

\subsection{Our results}

We have two theorems that we can call main results. Both of them solve Problem (2) but under different conditions. First, we consider the case when we have an upper bound on the norm of perturbation. 
 
\begin{theorem}\label{thm: betterFrame}
    Suppose $\{v_j\}_{j=1}^k$ is a frame in $\R^n$ with frame bounds $A<B$. For every $\epsilon >0$  there exists $\{\delta_j\}_{j=1}^k$ where  $||\delta_j||<\epsilon$ such that $\{\wt{v}_j\}=\{v_j+\delta_j\}_{j=1}^k$ is a frame with frame constants $0<A_1\le B_1$ where $$\frac{B_1}{A_1}<\frac{B}{A}.$$ 
\end{theorem}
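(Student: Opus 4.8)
The plan is to work directly with the frame operator $S = \sum_j v_j v_j^*$ (viewing vectors as columns in $\R^n$), whose eigenvalues are exactly the optimal frame bounds $A = \lambda_{\min}(S)$ and $B = \lambda_{\max}(S)$ at the extremes. The condition number $B/A$ is governed by the spread of the spectrum of $S$, so the strategy is to design the perturbation $\{\delta_j\}$ so that the new frame operator $\wt{S} = \sum_j (v_j + \delta_j)(v_j + \delta_j)^*$ has a strictly smaller ratio $\lambda_{\max}(\wt S)/\lambda_{\min}(\wt S)$. Since $A < B$ strictly, there is at least one eigenvector $u$ with $Su = Au$ and at least one eigenvector $w$ with $Sw = Bw$, and $u \perp w$. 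The idea is to add a small amount of "mass" in the direction $u$ (to push the smallest eigenvalue up) while leaving the direction $w$ essentially untouched.

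Concretely, I would pick an index $j_0$ and set all $\delta_j = 0$ except $\delta_{j_0} = t\, u$ for a small scalar $t \in (0,\epsilon/\|u\|)$ chosen below; actually, to keep the computation clean it is better to spread the perturbation: choose $\delta_j = t \langle e_j, \cdot\rangle$-type corrections, but the single-vector version already suffices. Then
\[
\wt S = S + t\big(v_{j_0} u^* + u v_{j_0}^*\big) + t^2 u u^*.
\]
One then estimates the eigenvalues of $\wt S$ by first-order perturbation theory: the eigenvalue of $\wt S$ near $A$ moves by approximately $t \cdot 2\,\mathrm{Re}\langle v_{j_0}, u\rangle \langle u, u\rangle / \|u\|^2 + O(t^2)$ along the $u$-direction, and crucially the $t^2 u u^*$ term contributes $+t^2\|u\|^2$ to the Rayleigh quotient at $u$, which is a strictly positive push. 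If we are unlucky and $\langle v_{j_0}, u\rangle \ne 0$ with the wrong sign for every $j_0$, we can flip $t \mapsto -t$; and if $\langle v_{j_0}, u\rangle = 0$ for all $j_0$ then the first-order term vanishes and the $t^2\|u\|^2$ term makes $\lambda_{\min}$ strictly increase. Either way, for $t$ small enough $\lambda_{\min}(\wt S) > A$ while $\lambda_{\max}(\wt S) \le B + Ct + Ct^2 < B + \lambda_{\min}(\wt S) - A$... — this last inequality is the delicate point and I address it next.

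The main obstacle is that perturbing to raise $\lambda_{\min}$ may also raise $\lambda_{\max}$, so one must guarantee the ratio genuinely drops; a crude bound $\lambda_{\max}(\wt S) \le B + O(t)$ is not obviously enough against $\lambda_{\min}(\wt S) \ge A + O(t)$. The resolution is to choose the perturbation orthogonal to the top eigenspace: decompose $\R^n = E_B \oplus E_B^\perp$ where $E_B$ is the $\lambda_{\max}$-eigenspace, pick $u \in E_B^\perp$ an eigenvector for $A$ (possible since $A<B$ forces $E_A \subseteq E_B^\perp$), and arrange the correction $\delta_{j_0} = t u$ only if $v_{j_0}$ itself has a suitable component — or, cleanest of all, add a brand-new style correction by replacing $v_{j_0}$ with $v_{j_0} + tu$ and separately observe that on $E_B$ the operator $\wt S$ restricted and compressed still has norm close to $B$ while the off-diagonal blocks are $O(t)$, so by a Schur-complement / block-matrix eigenvalue estimate $\lambda_{\max}(\wt S) \le B + O(t^2)$ provided $u \perp E_B$. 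Then $\lambda_{\max}$ moves by $O(t^2)$ with a controllable constant, while $\lambda_{\min}$ moves up by at least $ct$ (first order) or $ct^2$ with $c = \|u\|^2$ (if first order vanishes), and in both regimes choosing $t$ small makes $B_1/A_1 = \lambda_{\max}(\wt S)/\lambda_{\min}(\wt S) < B/A$. Finally I would verify $\|\delta_j\| = \|tu\| < \epsilon$ is satisfied by the smallness of $t$, and note $\wt S$ stays positive definite so $\{\wt v_j\}$ is indeed a frame, completing the argument.
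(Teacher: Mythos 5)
There is a genuine gap: your argument silently assumes the bottom eigenvalue $A$ of $S$ is simple, and the single-vector perturbation $\delta_{j_0}=tu$ can fail outright when $\dim E_A\ge 2$ (which certainly happens for $n\ge 3$). Write $a=P_{E_A}v_{j_0}$. The first-order splitting of the cluster of eigenvalues at $A$ is governed by the compression of $t\left(v_{j_0}u^*+uv_{j_0}^*\right)$ to $E_A$, i.e.\ by $t\left(au^*+ua^*\right)$, whose nonzero eigenvalues are $t\left(\langle a,u\rangle\pm\norm{a}\right)$. Unless $a$ is a nonnegative multiple of $u$, one of these is strictly negative for either sign of $t$, so $\lambda_{\min}(\wt{S})<A$ and the ratio need not improve. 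If you repair this by choosing $u=a/\norm{a}$, you hit the opposite problem: every $x\in E_A$ with $x\perp u$ then satisfies $\wt{S}x=Ax$ exactly (since $Px=tu\langle x,v_{j_0}\rangle=0$), so $A_1=A$, while the second-order shift of the top eigenvalue is $t^2|\langle w,v_{j_0}\rangle|^2/(B-A)\ge 0$ and is generically strictly positive, giving $B_1/A_1\ge B/A$. So a rank-two bump in a single eigendirection cannot, in general, raise the whole bottom eigenspace. (Two smaller issues: your fallback for the case $\langle v_{j_0},u\rangle=0$ for all $j_0$ is also wrong as reasoned, because the second-order coupling terms to the rest of the spectrum are negative and can overwhelm $t^2\norm{u}^2$ --- though that case is vacuous since $\sum_j|\langle u,v_j\rangle|^2=A\norm{u}^2>0$; and the perturbation-theoretic error terms are never given rigorous constants.)

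The fix requires pushing up \emph{all} of $E_A$ at once, and the paper does something cleaner still: it perturbs \emph{every} vector by $\delta_j=rS^{-1}v_j$ for small $r>0$. A short computation shows the new frame operator is exactly $S+2rI+r^2S^{-1}$, so every eigenvalue moves by the explicit map $\lambda\mapsto\lambda+2r+r^2/\lambda$, which is increasing on $[r,\infty)$; hence $A_1\ge A+2r+r^2/A$ and $B_1\le B+2r+r^2/B$, and the uniform additive push $2r$ compresses the ratio by elementary algebra, with no spectral perturbation theory, no case analysis, and no multiplicity issues. If you want to keep your ``local'' philosophy, the viable version is $\delta_j=t\,P_{E_A}v_j/A$ for all $j$, which yields $\wt{S}=S+(2t+t^2/A)P_{E_A}$ and raises exactly the eigenvalues in $E_A$ while fixing the rest; but some global family of perturbations indexed over all $j$ seems unavoidable.
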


	The {\it condition number} of a frame with optimal frame constants $A$ and $B$ is $\kappa= \frac BA$. The condition number indicates how "close" a frame is to being tight.  It also measures how sensitive the frame operator is to changes or errors in the input, and how much error in the output results from an error in the input. The problem of finding optimal  frame bounds and condition number for Gabor frames is  well studied. See for example the recent  \cite{Faulhuber2018,FS2023}. In applications it is desirable to have frames with a condition number as close to $1$ as possible; in the proof of Theorem \ref{thm: betterFrame} we provide an explicit expression for perturbations $\delta_j$ that reduce the condition number of a  given frame.

  Our next result shows that 
we can obtain a tight frame with a finite number of   perturbations 
\begin{theorem}\label{thm: tightAlg}
    Suppose $\{v_j\}_{j=1}^k$ is a  frame in $\R^n$ with optimal frame bounds $A< B$. We can construct $\{\delta_j\}_{j=1}^k$ such that $\{\wt{v}_j\}=\{v_j+\delta_j\}_{j=1}^k$ is a nonzero  tight frame, with an algorithm in at most $n-1$ steps.
\end{theorem}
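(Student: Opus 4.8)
The plan is to work entirely with the frame operator $S=\sum_{j=1}^{k}v_jv_j^{\top}$, a positive‑definite symmetric operator on $\R^{n}$, and to recall that $\{v_j\}$ is tight precisely when $S$ is a positive scalar multiple of the identity, equivalently when $S$ has a single distinct eigenvalue. Since the optimal bounds obey $A<B$, the operator $S$ has between $2$ and $n$ distinct eigenvalues. I would then show that a frame whose frame operator has $r\ge 2$ distinct eigenvalues admits an additive perturbation $\{v_j+\delta_j\}$ whose frame operator is again positive definite but has exactly $r-1$ distinct eigenvalues; iterating this at most $n-1$ times yields a frame operator of the form $cI$ with $c>0$, i.e.\ a nonzero tight frame. (Unlike in Theorem~\ref{thm: betterFrame}, no bound is imposed on $\|\delta_j\|$ here.)

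The heart of the argument is a realization step: if $\{v_j\}_{j=1}^{k}$ is a frame in $\R^{n}$ (so necessarily $k\ge n$) with frame operator $S$, and $M$ is \emph{any} positive‑semidefinite operator on $\R^{n}$, then there exist $\delta_1,\dots,\delta_k$ for which $\{v_j+\delta_j\}$ has frame operator $S+M$. To get this I would write $M=WW^{\top}$ with $W=[\,w_1\mid\cdots\mid w_m\,]$ and $m=\operatorname{rank}M$, and consider the $n\times(k+m)$ matrix $G=[\,v_1\mid\cdots\mid v_k\mid w_1\mid\cdots\mid w_m\,]$. Then $GG^{\top}=S+M$ is positive definite, so $\operatorname{rank}G=n$ and $\dim\ker G=(k+m)-n\ge m$, the inequality being exactly where $k\ge n$ is used. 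Choosing orthonormal $q_1,\dots,q_m\in\ker G$, completing them to an orthonormal basis of $\R^{k+m}$, and letting $Q$ be the orthogonal matrix having $q_1,\dots,q_m$ as its last $m$ columns, one finds $GQ=[\,\wt{v}_1\mid\cdots\mid\wt{v}_k\mid 0\mid\cdots\mid 0\,]$, whence $\sum_j\wt{v}_j\wt{v}_j^{\top}=GQQ^{\top}G^{\top}=GG^{\top}=S+M$. Setting $\delta_j=\wt{v}_j-v_j$ and noting $S+M\succeq S\succ 0$ shows that $\{v_j+\delta_j\}$ is a frame with the desired frame operator.

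With this in hand the iteration is immediate. If the current frame operator has distinct eigenvalues $\mu_1>\cdots>\mu_r$ with $r\ge 2$, let $E$ be the eigenspace of the smallest one $\mu_r$ and $P_E$ the orthogonal projection onto $E$, and apply the realization step with $M=(\mu_{r-1}-\mu_r)P_E\succeq 0$. Since $P_E$ commutes with $S$, the new frame operator $S+(\mu_{r-1}-\mu_r)P_E$ acts as $\mu_r I+(\mu_{r-1}-\mu_r)I=\mu_{r-1}I$ on $E$ and as $S$ on $E^{\perp}$, so its distinct eigenvalues are $\mu_1>\cdots>\mu_{r-1}$ — one fewer, and still all positive. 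Since the original frame operator has at most $n$ distinct eigenvalues, at most $n-1$ repetitions bring the frame operator to $\mu_1 I=BI$; as $B>0$ this is a nonzero tight frame, and recording the perturbations used at each stage gives the algorithm.

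I expect the only real obstacle to be the realization step itself — the assertion that a prescribed positive‑semidefinite increment to the frame operator can be produced by additively perturbing the given vectors \emph{without changing how many there are}; the device of appending the columns of $W$ and then rotating that appended block into $\ker G$ handles it, and the lone hypothesis it consumes, $k\ge n$, holds automatically for a frame in $\R^{n}$. The remaining ingredients — that eigenspace projections commute with $S$, that adding a positive‑semidefinite operator neither lowers an eigenvalue nor destroys positive definiteness, and that the number of distinct eigenvalues is an integer progress measure bounded by $n$ — are routine. (One could even apply the realization step once with $M=BI-S$ to reach a tight frame immediately; organizing it as an $(n-1)$‑step iteration is what makes both the bound on the number of steps and the construction transparent.)
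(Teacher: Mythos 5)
Your argument is correct in its essentials but takes a genuinely different route from the paper's. The paper keeps the perturbation in the rigid form $\delta_j=rS^{-1}v_j$, so that the new frame operator is $S+2rI+r^2S^{-1}$ and each eigenvalue moves by $\lambda\mapsto\lambda+2r+\tfrac{r^2}{\lambda}$; choosing $r=\sqrt{A_mB_m}$ at each stage makes the images of the smallest and largest eigenvalues coincide, so the multiplicity of the top eigenvalue grows by one per step and all $n$ eigenvalues agree after at most $n-1$ steps. You instead prove a realization lemma --- any positive semidefinite increment $M$ of the frame operator can be achieved by replacing the $k$ vectors with $k$ new ones; your append-$W$-and-rotate-into-$\ker G$ device is sound, and the hypothesis $k\ge n$ it uses holds for every frame --- and then do direct eigenvalue surgery with $M=(\mu_{r-1}-\mu_r)P_E$. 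Your route is more flexible and, as you observe, collapses to a one-step proof with $M=BI-S$; this also makes plain that without a bound on $\norm{\delta_j}$ the existence statement is nearly trivial, and that the substance of the paper's theorem is the closed-form perturbation it produces at each step, which your construction does not give (your $\wt v_j$ depend on a choice of orthonormal completion). Conversely, the paper's argument needs the optimality of $A_m,B_m$ at every stage, while yours works directly with the spectrum of $S$.

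One loose end: the theorem asks for a \emph{nonzero} tight frame, which the paper reads as ``no $\wt v_j$ is the zero vector'' and verifies by checking that $I+r_{m+1}S_m^{-1}$ is nonsingular. In your construction $\wt v_j=0$ exactly when the $j$-th column of $Q$ lies in $\ker G$, and this can occur when $k>n$, since $\ker G$ meets the span $V$ of the first $k$ columns of $Q$ in a subspace of dimension $k-n$. The repair is easy --- $\ker G\cap V$ is a proper subspace of $V$, so one may choose the orthonormal basis of $V$ (equivalently, rotate those $k$ columns within $V$) to avoid it --- but it needs to be said.
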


  The algorithm used in the proof of Theorem \ref{thm: tightAlg} produces the explicit perturbations $\delta_j$   that make the frame tight.

In Theorem \ref{thm: betterFrame} the frame constants do not have to be optimal, but the optimality of the frame constants in Theorem \ref{thm: tightAlg} is crucial at each step of the proof.

\medskip

The organization of this paper is as follows: In Section 2 we briefly recall the definitions and the tools that we use for our results. In Section 3 we prove our main results. In Section 4 we discuss Problems (1) and  (3). Finally, in Section 5 we discuss open problems and remarks. 

\medskip
\noindent
{\it Acknowledgments}. Foremost, we would like to express our sincere gratitude to our mentor Professor Laura De Carli for her continuous support of our research.

The research of this project was done during the Summer 2023 REU program ``AMRPU @ FIU" that took place at the Department of Mathematics and Statistics, Florida International University, and was supported by the NSF (REU Site) grant DMS-2050971. In particular, support of J.G. came from the above grant. We acknowledge the collaboration of Mikhail Samoshin in the initial stage of this project and his participation in the technical report (available at go.fiu.edu/amrpu).

\section{Preliminaries}

\subsection{Frames}

We have used the excellent textbook \cite{Christensen} for the definitions and some of the results presented in this section.
	
Let  ${\mathcal F}=\{f_j\}_{j\in J}$ be a frame in a Hilbert space ${\mathbb{H}}$. 
We recall the definition of three important operators that are associated with  ${\mathcal F}$.

    \begin{enumerate}
        \item The synthesis operator $T: \ell^2(\N) \to \mathbb{H}$ is given by $$T\{c_j\}_{j \in J} = \sum_{j \in J}c_jf_j.$$

        \item The analysis operator $T^*: \mathbb{H}\to\ell^2(\N)$ is given by $$T^*f= \{\langle f, f_j\rangle\}_{j \in J}.$$

        \item The frame operator $S:\mathbb{H}\to \mathbb{H}$ is given by $S=TT^*$ or equivalently, $$Sf = \sum_{j \in J}\langle f, f_j\rangle f_j.$$
    \end{enumerate}
If $ \H=\R^n$, the operator $T$ is represented by a matrix whose columns are the vectors of the frame. 

Frames, like bases, can be used to represent the vectors in the space.  However, with the added redundancy, we have to choose canonical ways in which to do this. If  $\{f_j\}_{j \in J}$ is  a Parseval frame, the frame operator $S=I$ where $I$ is the identity on $\mathbb{H}$ so we obtain an exact representation $$f=\sum_{j \in J}\langle f, f_j\rangle f_j$$ for every $f\in \mathbb{H}$, see \cite[Section 3.2]{Han}. Although the representation is not unique and $\{f_j\}_{j \in J}$ need not be an orthonormal basis, we still can recover coefficients using the inner product. 

If  $\{f_j\}_{j \in J}$ is a tight frame with frame bound $A$, then $S=AI$ where $I$ is the identity on $\mathbb{H}$. If $ \H=\R^n$,  the rows of the synthesis matrix $T$ are orthogonal and have the same norm $A$. For every $f\in \mathbb{H},$ $$f=\sum_{j \in J}\frac{1}{A}\langle f, f_j\rangle f_j.
$$
\noindent If the frame is not tight,  this representation formula needs to be modified. 
  Let  $\{f_j\}_{j \in J}$  be a frame with frame operator $S$. Then according to \cite[Section 6.1]{Han}, $S$ is invertible, $\{S^{-1}f_j\}_{j \in J}$ is a frame called the canonical dual frame, and for every $f\in \mathbb{H}$, 
  \begin{equation}\label{repr-formula}
  	f= \sum_{j \in J}\langle f, S^{-1}f_j\rangle f_j= \sum_{j \in J}\langle f, f_j\rangle S^{-1}f_j.\end{equation}

 The following lemma establishes the connection between the norms of  the vectors in a tight frame and the bound $A$. 
	It appears as an exercise in \cite{Han}, 
	but we provide a proof for the convenience of the reader.

\begin{lemma}\label{lem: TightFrameBound}
	If $\{f_j\}_{j=1}^k$ is a tight frame for $\R^n$ with frame bound $A$ then $$A=\frac{1}{n} \sum_{j=1}^{k} ||f_j||^2.$$
\end{lemma}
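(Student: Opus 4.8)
The plan is to compute the trace of the frame operator $S$ in two ways. Since $\{f_j\}_{j=1}^k$ is an $A$-tight frame for $\R^n$, we have $S = A I$, where $I$ is the identity on $\R^n$, so $\tr(S) = A\,\tr(I) = An$. It remains to show that $\tr(S) = \sum_{j=1}^k \|f_j\|^2$, and then dividing by $n$ gives the claim.

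For the second computation of $\tr(S)$, I would fix an orthonormal basis $\{e_i\}_{i=1}^n$ of $\R^n$ and write
\[
\tr(S) = \sum_{i=1}^n \langle Se_i, e_i\rangle = \sum_{i=1}^n \Big\langle \sum_{j=1}^k \langle e_i, f_j\rangle f_j,\, e_i \Big\rangle = \sum_{i=1}^n \sum_{j=1}^k |\langle e_i, f_j\rangle|^2.
\]
Interchanging the two finite sums and using Parseval's identity for the orthonormal basis $\{e_i\}$, namely $\sum_{i=1}^n |\langle e_i, f_j\rangle|^2 = \|f_j\|^2$, yields $\tr(S) = \sum_{j=1}^k \|f_j\|^2$. (Equivalently, one can note $S = TT^*$, so $\tr(S) = \tr(TT^*) = \tr(T^*T)$, and $T^*T$ is the Gram matrix of the $f_j$, whose diagonal entries are $\|f_j\|^2$.)

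Combining $An = \tr(S) = \sum_{j=1}^k \|f_j\|^2$ gives $A = \frac1n \sum_{j=1}^k \|f_j\|^2$, as desired.

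I do not expect a genuine obstacle here; the only points requiring minor care are justifying that $\tr(S)$ is basis-independent (standard) and the interchange of the two finite summations (immediate). The conceptual content is simply that the trace of $S$ is simultaneously $An$ (from tightness) and the total squared norm of the frame vectors (from the definition of $S$).
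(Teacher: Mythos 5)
Your proof is correct and is essentially the same argument as the paper's: both compute the double sum $\sum_{i=1}^n\sum_{j=1}^k |\langle e_i, f_j\rangle|^2$ in two orders, using Parseval in one order and the tight-frame identity (equivalently $S=AI$) in the other. The trace language is just a repackaging of the paper's direct interchange of summations.
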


\begin{proof}
	Let $\{e_i\}_{i=1}^n$ be the canonical orthonormal basis for $\R^n$. Then:
	
	\begin{align*}
		\frac{1}{n} \sum_{j=1}^k \norm{f_j}^2 
  &= \frac{1}{n} \sum_{j=1}^k \norm{ \sum_{i=1}^n \langle f_j,e_i \rangle e_i }^2
		= \frac{1}{n} \sum_{j=1}^k \sum_{i=1}^n \norm{ \langle f_j,e_i \rangle e_i }^2  \\
  &= \frac{1}{n} \sum_{j=1}^k \sum_{i=1}^n | \langle f_j,e_i \rangle |^2
  = \frac{1}{n} \sum_{i=1}^n \left( \sum_{j=1}^k | \langle e_i,f_j \rangle |^2 \right)\\
		&= \frac{1}{n} \sum_{i=1}^n \left( A \norm{e_i}^2 \right) = A.
	\end{align*}
	
\end{proof}
\subsection{The Paley-Wiener theorem} 

The Paley-Wiener theorem is one of the fundamental stability criteria. It was originally proved by R. Paley and N. Wiener in 1934 and it provides a sufficient condition for the stability of Riesz bases. A proof of this theorem can be found e.g. in \cite[Theorem 10]{Young}.

\begin{theorem}[Paley-Wiener]\label{thm:PWclassically}
    Let $\{f_j\}_{j \in J}$ be a Riesz basis for Banach space $X$ and let $\{g_j\}_{j \in J}$ be a sequence in $X$. Suppose there exists $0\le \lambda<1$ such that \begin{equation}
        \norm{\sum_{j \in J}c_j(f_j-g_j)}\le \lambda\norm{\sum_{j \in J}c_jf_j}
    \end{equation}
    for all finite sequences $\{c_j\}_{j \in J}$. Then, $\{g_j\}_{j \in J}$ is a Riesz basis for $X$.
\end{theorem}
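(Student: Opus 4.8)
The plan is to realize $\{g_j\}_{j \in J}$ as the image of the Riesz basis $\{f_j\}_{j \in J}$ under a bounded, boundedly invertible operator on $X$; since the class of Riesz bases is preserved by such operators, this finishes the proof. To build the operator, note that because $\{f_j\}_{j \in J}$ is a basis, every element of the dense subspace $D=\operatorname{span}\{f_j\}_{j \in J}$ has a \emph{unique} finite representation $x=\sum_{j \in J}c_jf_j$, so the assignment $Vx:=\sum_{j \in J}c_j(f_j-g_j)$ is a well-defined linear map $V\colon D\to X$. The perturbation inequality in the hypothesis is exactly the statement that $\|Vx\|\le\lambda\|x\|$ for every $x\in D$.

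Since $D$ is dense in the Banach space $X$ and $V$ is bounded on $D$, it extends uniquely to a bounded operator $V\colon X\to X$ with $\|V\|\le\lambda<1$. Put $S:=\operatorname{id}_X-V$. As $\|V\|<1$, the Neumann series $\sum_{n=0}^{\infty}V^{n}$ converges in operator norm to a bounded two-sided inverse of $S$, so $S$ is a bounded, boundedly invertible operator on $X$. Evaluating the definition of $V$ with $c_i=\delta_{ij}$ gives $Vf_j=f_j-g_j$, hence $Sf_j=g_j$ for each $j\in J$. Writing $f_j=Re_j$, where $R$ is a bounded invertible operator and $\{e_j\}_{j \in J}$ is the reference basis from which the Riesz basis is obtained (an orthonormal basis in the Hilbert-space case), we get $g_j=Sf_j=(SR)e_j$ with $SR$ bounded and invertible, so $\{g_j\}_{j \in J}$ is a Riesz basis for $X$.

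The proof is short, so there is no serious obstacle, but two points deserve care. First, the well-definedness of $V$ on $D$ genuinely uses the uniqueness of finite expansions in a basis — without it the formula for $V$ would be ambiguous. Second, the bounded-extension step uses completeness of $X$ (a bounded operator on a dense subspace of a Banach space extends uniquely to a bounded operator on the whole space). The only place where the Banach-space generality, as opposed to the Hilbert-space setting, calls for a remark is the closing line: one must know that the notion of ``Riesz basis'' in force in \cite{Young} is stable under composition with bounded invertible operators, which it is essentially by definition.
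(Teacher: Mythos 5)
Your argument is correct and is essentially the classical Neumann-series proof that the paper defers to \cite[Theorem 10]{Young}: define $V$ on the dense span by $V\bigl(\sum_j c_jf_j\bigr)=\sum_j c_j(f_j-g_j)$, extend by density with $\|V\|\le\lambda<1$, and observe that $(\operatorname{id}_X-V)f_j=g_j$ with $\operatorname{id}_X-V$ boundedly invertible, so the basis property transfers. No issues; the two points you flag (well-definedness via uniqueness of expansions, and completeness for the bounded extension) are exactly the right ones to check.
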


\noindent O. Christensen proved the following extension of Theorem \ref{thm:PWclassically} to frames in \cite{Christensen}.
\begin{theorem}\label{thm: Paley-Wiener}
    Let $\{f_j\}_{j \in J}$ be a frame for $\mathbb{H}$ with frame bounds $A, B>0$ and let $\{g_k\}_{k \in J}$ be a sequence in $\mathbb{H}$. Suppose there exists $\lambda, \mu\ge 0$ such that $\lambda+\frac{\mu}{\sqrt{A}}<1$ and \begin{equation}\label{eq: PWIneq}
        \norm{\sum_{k \in J}c_k(f_k-g_k)}\le \lambda\norm{\sum_{k \in J}c_kf_k}+\mu\left(\sum_{k \in J}|c_k|^2\right)^{\frac{1}{2}}
    \end{equation}

    for all finite sequences $\{c_k\}_{k \in J}$. Then, $\{g_k\}_{k \in J}$ is a frame for $\mathbb{H}$ with frame bounds $$A\left(1-\lambda-\frac{\mu}{\sqrt{A}}\right)^2, \text{ and } \; B\left(1+\lambda+\frac{\mu}{\sqrt{B}}\right)^2.$$ 

\end{theorem}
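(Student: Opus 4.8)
The plan is to recast the hypothesis as an inequality between the synthesis operators of the two sequences and invoke the standard characterization (see \cite{Christensen}) that a sequence $\{g_j\}_{j\in J}\subset\mathbb H$ is a frame exactly when its synthesis operator $U\colon\ell^2(J)\to\mathbb H$, $U\{c_j\}=\sum_{j\in J}c_jg_j$, is well defined, bounded, and surjective; in that case $\sum_{j\in J}|\langle h,g_j\rangle|^2=\|U^*h\|^2$, so $\|U\|^2$ is an upper frame bound, and whenever $V$ is a bounded right inverse of $U$ the identity $\|h\|=\|V^*U^*h\|\le\|V\|\,\|U^*h\|$ shows that $1/\|V\|^2$ is a lower frame bound. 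Writing $T$ for the synthesis operator of $\{f_j\}_{j\in J}$ and $S=TT^*$ for its frame operator, inequality \eqref{eq: PWIneq} says precisely that $\|(T-U)c\|\le\lambda\|Tc\|+\mu\|c\|_{\ell^2}$ for every finitely supported $c$.

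First I would prove boundedness of $U$ and the upper estimate. Since $\{f_j\}_{j\in J}$ has upper frame bound $B$ we have $\|Tc\|\le\sqrt B\,\|c\|_{\ell^2}$, so \eqref{eq: PWIneq} gives $\|(T-U)c\|\le(\lambda\sqrt B+\mu)\|c\|_{\ell^2}$; hence $T-U$, and therefore $U=T-(T-U)$, extends to a bounded operator on $\ell^2(J)$, with $\|Uc\|\le\|Tc\|+\|(T-U)c\|\le\sqrt B\big(1+\lambda+\tfrac{\mu}{\sqrt B}\big)\|c\|_{\ell^2}$, i.e. $\|U\|^2\le B\big(1+\lambda+\tfrac{\mu}{\sqrt B}\big)^2$. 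For surjectivity and the lower estimate I would bring in the pseudo-inverse $T^\dagger=T^*S^{-1}$ of $T$, which satisfies $TT^\dagger=I_{\mathbb H}$ and $\|T^\dagger h\|^2=\langle h,S^{-1}h\rangle\le\tfrac1A\|h\|^2$. Feeding $c=T^\dagger h$ into \eqref{eq: PWIneq} and using $\|Tc\|=\|h\|$ and $\|c\|_{\ell^2}\le\tfrac1{\sqrt A}\|h\|$ gives $\|(T-U)T^\dagger h\|\le\big(\lambda+\tfrac{\mu}{\sqrt A}\big)\|h\|$, so $\|(T-U)T^\dagger\|\le\lambda+\tfrac{\mu}{\sqrt A}<1$. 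Thus $UT^\dagger=I_{\mathbb H}-(T-U)T^\dagger$ is boundedly invertible via the Neumann series with $\|(UT^\dagger)^{-1}\|\le\big(1-\lambda-\tfrac{\mu}{\sqrt A}\big)^{-1}$; in particular $U$ is surjective, so $\{g_j\}_{j\in J}$ is a frame, and $V:=T^\dagger(UT^\dagger)^{-1}$ is a bounded right inverse of $U$ with $\|V\|^2\le\tfrac1A\big(1-\lambda-\tfrac{\mu}{\sqrt A}\big)^{-2}$. Together with the first step this yields the frame bounds $A\big(1-\lambda-\tfrac{\mu}{\sqrt A}\big)^2$ and $B\big(1+\lambda+\tfrac{\mu}{\sqrt B}\big)^2$.

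The one genuinely delicate point---and the step I expect to be the main obstacle---is that \eqref{eq: PWIneq} is only assumed for finitely supported $c$, whereas the key computation uses $c=T^\dagger h$, which need not be finitely supported, and also presupposes that $U$ has already been extended by continuity to all of $\ell^2(J)$. This is resolved by a density argument: the boundedness obtained in the first step allows \eqref{eq: PWIneq} to be extended from the dense subspace of finitely supported sequences to all of $\ell^2(J)$, both sides being norm-continuous in $c$, after which applying it to $T^\dagger h$ is justified. Everything else is routine operator bookkeeping.
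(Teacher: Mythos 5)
Your argument is correct and is essentially the standard proof of this perturbation theorem; the paper itself does not prove the statement but cites it from Christensen's book, and your route (bounding $U$ via $U=T-(T-U)$ for the upper bound, then showing $UT^\dagger=I-(T-U)T^\dagger$ is invertible by a Neumann series to get surjectivity and the lower bound $1/\|V\|^2$ from the right inverse $V=T^\dagger(UT^\dagger)^{-1}$) is exactly the one used there. Your handling of the density issue, extending \eqref{eq: PWIneq} from finitely supported sequences to all of $\ell^2(J)$ by continuity before substituting $c=T^\dagger h$, is the correct way to close the only delicate step.
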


\section{Proofs of Theorems \ref{thm: betterFrame} and \ref{thm: tightAlg}}
First, we prove a simple lemma on which the proof of Theorems \ref{thm: betterFrame} and \ref{thm: tightAlg} are based. 
  
\begin{lemma}\label{lem: newEigenvalues}
    Let $S$ be an invertible, self-adjoint $n\times n$ matrix. Then for every $r>0$, $S+2rI_{n\times n}+r^2S^{-1}$ has all eigenvalues of the form $\lambda+2r+\frac{r^2}{\lambda}$ where $\lambda$ is an eigenvalue of $S$.  
\end{lemma}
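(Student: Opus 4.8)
The plan is to invoke the spectral theorem. Since $S$ is self-adjoint, there is an orthonormal basis $\{u_1,\dots,u_n\}$ of $\R^n$ (or $\C^n$) consisting of eigenvectors of $S$, say $Su_i=\lambda_i u_i$ with each $\lambda_i\in\R$; invertibility of $S$ forces $\lambda_i\neq 0$, so that $S^{-1}u_i=\lambda_i^{-1}u_i$ as well. In particular each $u_i$ is simultaneously an eigenvector of $S$, of $I_{n\times n}$, and of $S^{-1}$.

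The next step is a one-line computation: applying $M:=S+2rI_{n\times n}+r^2S^{-1}$ to each basis vector gives
\[
Mu_i = Su_i + 2r\,u_i + r^2 S^{-1}u_i = \left(\lambda_i + 2r + \frac{r^2}{\lambda_i}\right)u_i .
\]
Hence $\{u_1,\dots,u_n\}$ is an orthonormal eigenbasis for $M$, with associated eigenvalues $\lambda_i+2r+\tfrac{r^2}{\lambda_i}$. Since an $n\times n$ matrix has exactly $n$ eigenvalues counted with multiplicity, and the $n$ numbers just exhibited (with multiplicity) account for all of them, every eigenvalue of $M$ is of the stated form. One may phrase the same argument via functional calculus: $M=\varphi(S)$ for the rational function $\varphi(x)=x+2r+r^2/x$, which is regular on $\operatorname{spec}(S)\subset\R\setminus\{0\}$, so $\operatorname{spec}(M)=\varphi(\operatorname{spec}(S))$; equivalently one can note the algebraic identity $S+2rI_{n\times n}+r^2S^{-1}=S^{-1}(S+rI_{n\times n})^2$, a product of commuting self-adjoint (in the intended application, positive) matrices sharing the eigenbasis of $S$.

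There is no real obstacle here: the only point requiring a word of care is the direction ``every eigenvalue has this form'' (as opposed to merely ``each $\lambda+2r+r^2/\lambda$ is an eigenvalue''), and this is settled at once by producing the full eigenbasis, which the spectral theorem supplies because $S$ is self-adjoint. I would keep the write-up to the two displayed lines above plus the sentence justifying completeness of the eigenbasis.
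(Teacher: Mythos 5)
Your argument is correct and is essentially the paper's own proof: both rest on the spectral theorem to diagonalize $S$ (the paper writes $S=QDQ^{-1}$ and conjugates, you apply $S+2rI_{n\times n}+r^2S^{-1}$ to an orthonormal eigenbasis), yielding the same conclusion. The extra remarks on functional calculus and the factorization $S^{-1}(S+rI_{n\times n})^2$ are fine but not a different proof in substance.
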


\begin{proof}
    Since $S$ is self-adjoint, by the Spectral theorem, $S$ is diagonalizable. Hence, there exists $Q$ invertible and $D= \text{diag}(A, \dots, B)$ where $S= QDQ^{-1}.$ Since $S$ is invertible, $D$ is invertible. Thus, $$S+2rI_{n\times n}+r^2S^{-1}= QDQ^{-1}+2rI_{n\times n}+r^2QD^{-1}Q^{-1}= Q(D+2rI_{n\times n}+r^2D^{-1})Q^{-1}.$$ Hence, the eigenvalues of $S+2rI_{n\times n}+r^2S^{-1}$ are of the form $\lambda+2r+\frac{r^2}{\lambda}$ where $\lambda$ is an eigenvalue of $S$.
\end{proof}
\subsection{Proof of Theorem $\ref{thm: betterFrame}$}
\begin{proof}

    Let $T$ be the synthesis operator and $S=TT^*$ be the frame operator for $\{v_j\}_{j=1}^k$. By \cite[Proposition 3.19]{Han}, every $x\in\R^n$ can be written as $$x=\sum_{j=1}^k \langle x, S^{-1}v_j\rangle v_j.$$ Let $\delta_j = rS^{-1}v_j$ where $0<r< \min\left(\frac{\epsilon}{\max_{j}||S^{-1}v_j||}, A\right)$.
     Now, for $x\in \R^n$ we have:
    \begin{align*}
        \sum_{j=1}^k|\langle x, \wt{v}_j \rangle|^2 &= \sum_{j=1}^k|\langle x, v_j \rangle|^2+\sum_{j=1}^k 2\langle x,v_j\rangle\langle x, \delta_j\rangle +\sum_{j=1}^k|\langle x, \delta_j\rangle|^2\\
        &=\sum_{j=1}^k|\langle x, v_j \rangle|^2+\sum_{j=1}^k2\langle x,v_j\rangle\langle x, rS^{-1}v_j\rangle +\sum_{j=1}^k|\langle x, rS^{-1}v_j\rangle|^2\\
        &= \sum_{j=1}^k|\langle x, v_j \rangle|^2+2r\langle x, \sum_{j=1}^k\langle x,  S^{-1}v_j\rangle v_j\rangle +r^2\sum_{j=1}^k|\langle x, S^{-1}v_j\rangle|^2\\
        &=\langle Sx, x\rangle +2r\langle x, x\rangle +r^2\langle S^{-1}x, x\rangle \\
        &=\langle (S+2rI_{n\times n}+r^2S^{-1})x, x\rangle\\
        &= \langle (T+rS^{-1}T)(T+rS^{-1}T)^*x, x\rangle. 
    \end{align*}

    Let $\lambda_{min},\lambda_{max}$ be the minimum and maximum eigenvalues of $S$. By \cite[Proposition 3.27]{Han}, $\lambda_{min}>0$. By Lemma \ref{lem: newEigenvalues} and \cite[Proposition 3.27]{Han}, since $r<A\le \lambda_{min}$ and the map $\lambda\mapsto \lambda+2r+\frac{r^2}{\lambda}$ is increasing for $\lambda\ge r$, the optimal lower and upper frame constants are
    $$A_1=\lambda_{min}+2r+\frac{r^2}{\lambda_{min}}\ge A+2r+\frac{r^2}{A},$$ and  $$B_1=\lambda_{max}+2r+\frac{r^2}{\lambda_{max}}\le B+2r+\frac{r^2}{B}.$$

    \noindent Hence, $$\frac{B_1}{A_1}\le \frac{B+2r+\frac{r^2}{B}}{A+2r+\frac{r^2}{A}}.$$ Note finally $$\frac{B+2r+\frac{r^2}{B}}{A+2r+\frac{r^2}{A}}<\frac{B}{A} \iff 2r(B-A)+r^2\left(\frac{B}{A}-\frac{A}{B}\right)>0,$$ so the result follows.
\end{proof}

Using the approach in Theorem \ref{thm: betterFrame}, we have an algorithm which produces to a tight frame in finitely many steps. Each step depends on the frame bounds and the inverse of the frame operator of the previous step. 

\subsection{Proof of Theorem $\ref{thm: tightAlg}$}
\begin{proof}
    Let $T_0$ be the synthesis operator and $S_0=T_0T_0^*$ be the frame operator for $\{v_j\}_{j=1}^k$. Let $A_0=A$ and $B_0=B$.

    \noindent For each $1\le m\le n-1$, define $r_m$ and $\{v_j^{(m)}\}_{j=1}^k$ by, $$\begin{cases}
        r_m=\sqrt{A_{m-1}B_{m-1}}\\
        v_j^{(m)}= v_j^{(m-1)}+r_mS_{m-1}^{-1}v_j^{(m-1)} & j=1,2,\dots, k,
    \end{cases}$$ where $A_m, B_m$ are the frame bounds for $\{v_j^{(m)}\}_{j=1}^k$.

    Moreover, let the synthesis operator for $\{v_j^{(m)}\}_{j=1}^k$ be $$T_m = \begin{bmatrix}
            \vert & \vert & & \vert\\
            v_1^{(m)} & v_2^{(m)} & \vdots & v_k^{(m)}\\
            \vert & \vert & & \vert
    \end{bmatrix}$$ and the frame operator 
        $S_m = T_mT_m^*$.

    We will show via induction that for each $1\le m\le n-1$, $\{v_{j}^{(m)}\}_{j=1}^k$ is a frame and at least $m+1$ of the eigenvalues of $S_m$ are equal to $B_m$.  Indeed, for $m=1$, $r_1=\sqrt{AB}.$  Note that $T_1 = T_{0}+r_1S_{0}^{-1}T_{0}.$ Hence, \begin{align*}
        S_1 = T_1T_1^* &= (T_{0}+r_1S_{0}^{-1}T_{0})(T_{0}^*+r_1T_0^*S_{0}^{-1})\\
        &= S_0+r_1S_0^{-1}T_0T_0^*+ r_1T_0T_0^*S_0^{-1}+r_1^2S_0^{-1}T_0T_0^*S_0^{-1}\\
        &=S_0+2r_1I_{n\times n}+r_1^2S_0^{-1}.
    \end{align*}
    
     \noindent By Lemma \ref{lem: newEigenvalues}, the eigenvalues of $S_1$ are of the form $$\lambda^\prime =\lambda+2r_1+\frac{r_1^2}{\lambda},$$ where $\lambda$ ranges over the eigenvalues of $S_0$. Since $\lambda\ge A>0$, $\lambda^\prime >0$, and so $\{v_j^{(1)}\}_{j=1}^k$ is a frame. Moreover, two of the eigenvalues of $S_1$ are $$A+2r_1+\frac{r_1^2}{A}, \text{ and } \; B+2r_1+\frac{r_1^2}{B}$$ which are both equal and so, equal to $B_1$.

     Now suppose for some $1\le m\le n-2$, we have $\{v_j^{(m)}\}_{j=1}^k$ a frame, and $m+1$ of the eigenvalues of $S_m$ equal to $B_m$. Note $r_{m+1}=\sqrt{A_mB_m}$. By the same computation as in the base case, the eigenvalues of $S_{m+1}$ are of the form $$\lambda^\prime =\lambda+2r_{m+1}+\frac{r_{m+1}^2}{\lambda},$$ where $\lambda$ ranges over the eigenvalues of $S_m$. Since $\lambda\ge A_m>0$, we again have $\{v_j^{(m+1)}\}_{j=1}^k$ a frame. After the mapping $$\lambda \mapsto \lambda+2r_{m+1}+\frac{r_{m+1}^2}{\lambda},$$ the $m$ eigenvalues of $S_m$ equal to $B_m$ are still equal. Moreover, since $r_{m+1}=\sqrt{A_mB_m}$,  $$A_m+2r_{m+1}+\frac{r_{m+1}^2}{A_m}= B_m+2r_{m+1}+\frac{r_{m+1}^2}{B_m}=B_{m+1}.$$ Thus, we have $m+1$ eigenvalues of $S_{m+1}$ equal to $B_{m+1}$ (namely, those mapped from the $m$ equal on the previous step and the one mapped from $A_m$). This completes the induction.

    Therefore, $S_{n-1}$ is a multiple of the identity since it has all equal eigenvalues. Hence, $\{v_j^{(n-1)}\}_{j=1}^k$ is a tight frame. Then, let $\delta_j= v_j^{(n-1)}-v_j$ for each $1\le j\le k$.

    Finally, we show $\{v_j^{(n-1)}\}_{j=1}^k$ is nonzero. In particular, after every iteration, none of the frame elements become $0$. This follows from induction. Indeed, for $m=0$, all of the frame elements are nonzero. Suppose for some $m\ge 0$, $\{v_j^{(m)}\}_{j=1}^k$ is nonzero. Observe for every $1\le j\le k$,  $$v_j^{(m+1)}= (I_{n\times n}+r_{m+1}S_{m}^{-1})v_{j}^{(m)}.$$ It suffices to show $I_{n\times n}+r_{m+1}S_{m}^{-1}$ is nonsingular. By the diagonalization argument as in Lemma \ref{lem: newEigenvalues}, the eigenvalues of $I_{n\times n}+r_{m+1}S_{m}^{-1}$ are of the form $1+\frac{r_{m+1}}{\lambda}$ where $\lambda$ ranges over the eigenvalues of $S_m$. Since these are all positive, we are done.
\end{proof}

\section{Perturbations in Finite Dimensions }
In this section we focus on Problems (1) and (3) stated  in the introduction. First we consider Problem (1): "How much can a frame be perturbed and still remain a frame?".

Given a frame, we quantify additive perturbations that preserve the frame structure. We begin by establishing a sharp bound on the norm of the perturbations.

\begin{prop}\label{thm: StabilitywithPW}
    Suppose $\{v_j\}_{j=1}^k$ is a frame in $\R^n$ with frame bounds $A, B>0$. If $\epsilon\le \frac{\sqrt{A}}{\sqrt{k}}$, then for every $\{\delta_j\}_{j=1}^k$ with $||\delta_j||< \epsilon$ ,  the set $\{v_j+\delta_j\}_{j=1}^k$ is a frame.
\end{prop}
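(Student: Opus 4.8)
The plan is to apply the frame version of the Paley–Wiener theorem (Theorem \ref{thm: Paley-Wiener}) with $f_j=v_j$ and $g_j=v_j+\delta_j$, so that $f_j-g_j=-\delta_j$. The left-hand side of inequality \eqref{eq: PWIneq} becomes $\norm{\sum_{j=1}^k c_j\delta_j}$, and the natural estimate is the crude one: by the triangle inequality and Cauchy–Schwarz,
\[
\norm{\sum_{j=1}^k c_j\delta_j}\le \sum_{j=1}^k |c_j|\,\norm{\delta_j} < \epsilon\sum_{j=1}^k |c_j| \le \epsilon\sqrt{k}\left(\sum_{j=1}^k |c_j|^2\right)^{1/2}.
\]
This puts us in the hypothesis of Theorem \ref{thm: Paley-Wiener} with $\lambda=0$ and $\mu=\epsilon\sqrt{k}$. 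The admissibility condition $\lambda+\frac{\mu}{\sqrt{A}}<1$ then reads $\frac{\epsilon\sqrt{k}}{\sqrt{A}}<1$, i.e. $\epsilon<\frac{\sqrt{A}}{\sqrt{k}}$.

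The only subtlety is the boundary case: the statement allows $\epsilon\le \frac{\sqrt{A}}{\sqrt{k}}$ (equality included), whereas Theorem \ref{thm: Paley-Wiener} requires strict inequality $\lambda+\frac{\mu}{\sqrt A}<1$. This is resolved by noting that the hypothesis on the perturbations is the \emph{strict} bound $\norm{\delta_j}<\epsilon$, so one actually gets
\[
\norm{\sum_{j=1}^k c_j\delta_j} < \epsilon\sqrt{k}\left(\sum_{j=1}^k |c_j|^2\right)^{1/2}\le \sqrt{A}\left(\sum_{j=1}^k |c_j|^2\right)^{1/2}.
\]
To make this fit the theorem cleanly I would set $\mu=\max_j\norm{\delta_j}\cdot\sqrt{k}$; since each $\norm{\delta_j}<\epsilon\le\frac{\sqrt A}{\sqrt k}$, we have $\mu<\sqrt A$, hence $\frac{\mu}{\sqrt A}<1$ with $\lambda=0$, and Theorem \ref{thm: Paley-Wiener} applies (if all $\delta_j=0$ there is nothing to prove). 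One should also record that the conclusion gives explicit new frame bounds $A\bigl(1-\frac{\mu}{\sqrt A}\bigr)^2$ and $B\bigl(1+\frac{\mu}{\sqrt B}\bigr)^2$, which are positive and finite, confirming the perturbed family is a genuine frame.

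There is essentially no hard obstacle here — the argument is a direct invocation of Christensen's theorem with the trivial choice $\lambda=0$. The one thing worth thinking about (and which presumably motivates the word ``sharp'' in the surrounding text) is whether the constant $\frac{\sqrt A}{\sqrt k}$ can be improved: the bound on $\norm{\sum c_j\delta_j}$ used above is wasteful because it ignores cancellation and the geometry of the $\delta_j$, yet for adversarially chosen $\delta_j$ (all parallel, of norm near $\epsilon$) one can drive the smallest frame bound of the perturbed system to $0$ precisely when $\epsilon\sqrt k$ reaches $\sqrt A$, so no dimension-free improvement of the threshold is possible. I would include such an extremal example after the proof to justify sharpness, but for the proposition itself the Paley–Wiener route above suffices.
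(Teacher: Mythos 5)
Your proof is correct and follows essentially the same route as the paper: both apply Christensen's Paley--Wiener theorem with $\lambda=0$ and $\mu=\max_j\norm{\delta_j}\cdot\sqrt{k}$ after the same triangle-inequality/Cauchy--Schwarz estimate, and both handle the boundary case $\epsilon=\frac{\sqrt A}{\sqrt k}$ by exploiting the strict inequality $\norm{\delta_j}<\epsilon$. Your closing remark on sharpness matches the paper's Corollary \ref{cor: UTFstability}, which exhibits exactly such an extremal example.
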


\begin{proof}

Fix $\{\delta_j\}_{j=1}^k$ with $||\delta_j||< \epsilon$. Let $\alpha = \max_{j}||\delta_j||$. We apply Theorem \ref{thm: Paley-Wiener}. Fix $\{c_m\}_{j=1}^k\subset \R$. Then by the triangle inequality and Cauchy-Schwarz, \begin{align*}
    \left|\left|\sum_{j=1}^k c_j\delta_{j}\right|\right| &\le \sum_{j=1}^k |c_j|\cdot ||\delta_j||\le \left(\sum_{j=1}^k |c_j|^{2}\right)^{\frac{1}{2}}\left(\sum_{j=1}^k ||\delta_j||^2\right)^{\frac{1}{2}}\\
    &\le\alpha\sqrt{k}\left(\sum_{j=1}^k |c_j|^{2}\right)^{\frac{1}{2}}.
\end{align*}
Since $\frac{\alpha\sqrt{k}}{\sqrt{A}}< \frac{\epsilon\sqrt{k}}{\sqrt{A}}\le 1$, we are done.
\end{proof}

There are frames for which this bound on $\epsilon$ is sharp. To show this, we have the following corollary:

\begin{corollary}\label{cor: UTFstability}
    Suppose $\{v_j\}_{j=1}^k$ is a unit tight frame in $\R^n$. If $\epsilon\le  \frac{1}{\sqrt{n}}$, then for every $\{\delta_j\}_{j=1}^k$ with $||\delta_j||< \epsilon$ , $\{v_j+\delta_j\}_{j=1}^k$ is a frame. This bound is optimal for $\epsilon$.
\end{corollary}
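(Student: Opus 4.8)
For a unit tight frame $\{v_j\}_{j=1}^k$ in $\R^n$, Lemma~\ref{lem: TightFrameBound} gives $A = \frac{1}{n}\sum_{j=1}^k \|v_j\|^2 = \frac{k}{n}$, since each $\|v_j\|=1$. The plan is to substitute this into Proposition~\ref{thm: StabilitywithPW}: the bound $\epsilon \le \frac{\sqrt{A}}{\sqrt{k}}$ becomes $\epsilon \le \frac{\sqrt{k/n}}{\sqrt{k}} = \frac{1}{\sqrt{n}}$, which immediately yields the stated sufficient condition.

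The substantive part is optimality, i.e.\ exhibiting a unit tight frame and a perturbation of norm exactly (or arbitrarily close to) $\frac{1}{\sqrt n}$ that destroys the frame property. The natural candidate is an orthonormal basis $\{e_1,\dots,e_n\}$ of $\R^n$ (which is a unit tight frame with $k=n$, $A=1=\frac{1}{\sqrt n}\cdot\sqrt n$, so here the threshold is $\frac{1}{\sqrt n}=\frac{1}{\sqrt k}$). Actually, to match the corollary's statement one wants $k$ vectors; the cleanest example is to take $k=n$ and the standard basis. Then I would perturb so that the perturbed system spans a proper subspace: for instance set $\delta_j$ to rotate each $e_j$ slightly toward a common hyperplane, or more simply take $\delta_1 = -\frac{1}{\sqrt n}\sum_{i=1}^n \frac{1}{\sqrt n}e_i \cdot(\dots)$ — the concrete choice is to make $v_1+\delta_1$ lie in $\mathrm{span}(v_2,\dots,v_n)$, forcing $n$ vectors into an $(n-1)$-dimensional subspace, hence not a frame. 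One checks the required perturbation has norm equal to the distance from $e_1$ to that hyperplane, which can be made equal to (a value approaching) $\frac{1}{\sqrt n}$ by choosing the hyperplane appropriately; with strict inequality $\|\delta_j\|<\epsilon$ in the hypothesis, one shows that at the threshold $\epsilon=\frac{1}{\sqrt n}$ there exist perturbations of norm arbitrarily close to $\frac1{\sqrt n}$ that fail, so no larger $\epsilon$ works.

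The main obstacle is arranging the optimality example so that a perturbation of norm \emph{exactly} meeting the bound collapses the span while keeping all $\|\delta_j\|$ controlled — one must be careful whether the failure occurs at $\epsilon = \frac{1}{\sqrt n}$ or only in the limit, given the strict inequality $\|\delta_j\|<\epsilon$ in the statement. I expect the right formulation is: for every $\epsilon > \frac{1}{\sqrt n}$ there is a unit tight frame and a perturbation with $\|\delta_j\|<\epsilon$ that is not a frame, which is what "optimal" should mean here. I would carry this out by: (i) invoking Lemma~\ref{lem: TightFrameBound} to get $A=k/n$ and deducing the sufficiency half from Proposition~\ref{thm: StabilitywithPW}; (ii) taking the orthonormal basis example; (iii) constructing explicit $\delta_j$ (e.g.\ $\delta_j = -\frac{t}{n}\sum_{i}e_i$ scaled so the perturbed vectors become linearly dependent) and computing that the threshold norm is $\frac{1}{\sqrt n}$; (iv) concluding that the constant cannot be improved.
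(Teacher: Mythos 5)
Your proposal is correct and follows essentially the same route as the paper: substitute $A=k/n$ from Lemma~\ref{lem: TightFrameBound} into Proposition~\ref{thm: StabilitywithPW} to get the threshold $1/\sqrt{n}$, then exhibit optimality by collapsing an orthonormal basis into a hyperplane with perturbations of norm exactly $1/\sqrt{n}$ (the paper does this concretely in $\R^2$ by sending both standard basis vectors to $\tfrac12(1,1)$, while your $\delta_j=-\tfrac1n\sum_i e_i$ is the same idea in general dimension). Your observation about the strict inequality $\|\delta_j\|<\epsilon$ --- so that ``optimal'' must be read as ``no $\epsilon>1/\sqrt{n}$ works'' --- is a point the paper's own example glosses over.
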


\begin{proof}
    Note by Lemma \ref{lem: TightFrameBound}, $A=\frac{k}{n}$ for a unit tight frame. Then by Proposition \ref{thm: StabilitywithPW}, we reach the desired bound on $\epsilon.$
    We will now give an example of equality. Let the frame be $\left\{\begin{bmatrix}
    1\\ 0
\end{bmatrix},\begin{bmatrix}
    0\\ 1
\end{bmatrix}\right\}$ . By \cite[Lemma 4.1]{Han}, this is a unit-tight frame. Moreover, if $\epsilon=\frac{1}{\sqrt{2}}$, then we can perturb all the frame elements to be $\frac{1}{2}\begin{bmatrix}
    1\\ 1
\end{bmatrix}.$ Clearly, this is no longer a frame.

\end{proof}

\begin{remark}
Our Corollary \ref{cor: UTFstability} is a more general version of the following problem that appears as exercise 6.B.14 in \cite{Axler}:
\begin{center}
   Suppose $\{e_j\}_{j=1}^n$ is an orthonormal basis of $\R^n$ and $\{v_j\}_{j=1}^n$ such that $\norm{e_j-v_j}\le \frac{1}{\sqrt{n}}$ for each $j$. Prove $\{v_j\}_{j=1}^n$ is a basis.
\end{center}
\end{remark}
 
\subsection{Adding Vectors to a Frame}
In this section, we focus our attention on Problem (3) stated in the introduction with regard to tight frames. Theorem 3.3 in \cite{LS2009} establishes that if a tight frame is augmented by $N$ vectors and the resulting frame is still tight, then $N$ must be greater than the dimension of the space.

In the case of $\R^n$, we show that the only way that a tight frame can be augmented and still remain tight is when the vectors added form a tight frame themselves. Note that the vectors added forming a tight frame implies that the amount of vectors needs to be at least $n$, the dimension of $\R^n$, since being a frame in finite dimension is equivalent to being a spanning set.

\begin{prop}
	\label{TightFramesProp}
	Let $\{v_i\}_{i=1}^k$ be a tight frame for $\R^n$ with frame bound $A$. Append $p$ vectors $v_{k+1},v_{k+2},...,v_{k+p}$ obtaining $\{v_i\}_{i=1}^{k+p}$. Then, $\{v_i\}_{i=1}^{k+p}$ is a tight frame for $\R^n$ if and only if  $\{v_i\}_{i=k+1}^{k+p}$ is a tight frame for $\R^n$.
\end{prop}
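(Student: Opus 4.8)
The plan is to argue entirely at the level of frame operators, using that they add under concatenation of frames. Let $T_1=[\,v_1\,|\cdots|\,v_k\,]$, $T_2=[\,v_{k+1}\,|\cdots|\,v_{k+p}\,]$ and $T=[\,T_1\,|\,T_2\,]$ be the synthesis matrices, and set $S_1=T_1T_1^*$, $S_2=T_2T_2^*$, $S=TT^*$. The only structural identity needed is $S=T_1T_1^*+T_2T_2^*=S_1+S_2$. By hypothesis $\{v_i\}_{i=1}^k$ is $A$-tight, so $S_1=AI_{n\times n}$. I also use the standard fact recalled in Section~2 that a family of vectors in $\R^n$ is a $C$-tight frame if and only if its frame operator equals $CI_{n\times n}$ for some $C>0$ (the reverse direction holds because such an $S$ is an invertible scalar multiple of the identity, so the family spans $\R^n$ and satisfies the frame inequality with $A=B=C$).

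For the ``if'' direction, suppose $\{v_i\}_{i=k+1}^{k+p}$ is a $C$-tight frame. Then $S_2=CI_{n\times n}$, hence $S=(A+C)I_{n\times n}$, so $\{v_i\}_{i=1}^{k+p}$ is an $(A+C)$-tight frame.

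For the ``only if'' direction, suppose $\{v_i\}_{i=1}^{k+p}$ is a $D$-tight frame, so $S=DI_{n\times n}$. Then $S_2=S-S_1=(D-A)I_{n\times n}$. Since $S_2=T_2T_2^*$ is positive semidefinite we get $D-A\ge 0$, and whenever $D-A>0$ the identity $S_2=(D-A)I_{n\times n}$ says exactly that $\{v_i\}_{i=k+1}^{k+p}$ is a $(D-A)$-tight frame.

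The one delicate point, which I expect to need an explicit remark rather than any computation, is the borderline case $D=A$: there $S_2=0$, which forces every appended vector to vanish, so $\{v_i\}_{i=k+1}^{k+p}$ is not a frame; this is precisely the vacuous situation of ``appending'' $p$ zero vectors. I would dispose of it by noting that as soon as at least one appended vector is nonzero we have $\langle S_2 x,x\rangle>0$ for some $x$, hence $D>A$, and the equivalence goes through. Everything else is immediate from $S=S_1+S_2$ together with the scalar-multiple-of-the-identity characterization of tight frames.
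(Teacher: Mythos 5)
Your proof is correct and is essentially the paper's argument in operator form: the identity $S=S_1+S_2$ is exactly the quadratic-form computation $\sum_{i=1}^{k+p}|\langle v,v_i\rangle|^2=\sum_{i=1}^{k}|\langle v,v_i\rangle|^2+\sum_{i=k+1}^{k+p}|\langle v,v_i\rangle|^2$ that the paper uses, so the two proofs are the same up to notation. One small point in your favor: you explicitly flag the degenerate case $D=A$, where all appended vectors must be zero and the ``only if'' direction genuinely fails; the paper's proof passes over this silently by asserting that $\{v_i\}_{i=k+1}^{k+p}$ is tight with bound $\hat A-A$ without checking that this quantity is strictly positive, so your version is the more careful of the two.
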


\begin{proof}
The reverse direction follows directly from the definition of a tight frame. It suffices to show the forward direction. Since $\{v_i\}_{i=1}^k$ is tight with bound $A$ we have $\sum_{i=1}^{k} |\langle v,v_i \rangle|^2 = A||v||^2$ for all $v \in \R^n$. Suppose $\{v_i\}_{i=1}^{k+p}$ is tight with frame bound $\hat{A}$ so that $\sum_{i=1}^{k+p} |\langle v,v_i \rangle|^2 = \hat{A}||v||^2$ for all $v \in \R^n$. Then for all $v \in \R^n$: 
\begin{gather*}
\hat{A}||v||^2 = \sum_{i=1}^{k+p} |\langle v,v_i \rangle|^2 = \sum_{i=1}^{k} |\langle v,v_i \rangle|^2 + \sum_{i=k+1}^{k+p} |\langle v,v_i \rangle|^2 = A||v||^2 + \sum_{i=k+1}^{k+p} |\langle v,v_i \rangle|^2 \\
\implies \sum_{i=k+1}^{k+p} |\langle v,v_i \rangle|^2 = \left( \hat{A}-A \right)||v||^2
\end{gather*}
which shows that $\{v_i\}_{i=k+1}^{k+p}$ is tight with frame bound $(\hat{A}-A)$.

\end{proof}

It is known that every finite frame for $\R^n$ can be turned into a tight frame with the addition of at most $n-1$ vectors. See Proposition 6.1 in \cite{Casazza2016}. If you start with a tight frame nothing needs to be added to make it tight and of course, adding zero vectors (not adding a vector at all) is in accordance with the "at most $n-1$ vectors" condition. However, it is interesting to note that \ref{TightFramesProp} implies that if the frame is already tight and vectors will be added, we need $n$ or more vectors to produce another tight frame. 

Applications making use of frames require the associated algorithms to be numerically stable, which tight frames satisfy optimally as noted in \cite{KUTYNIOK2013}. Thus, a key question in frame theory is the following: given a frame, can the frame vectors be modified to become a tight frame? One way to do so is by scaling each frame vector as in \cite{DeCarliCassaza}. This motivates the following definition for a scalable frame which we borrow from \cite{DeCarliCassaza}:

\begin{definition}
	A frame $\left\{x_i\right\}_{i=1}^{m}$ for $\R^n$ is scalable if there exist non-negative constants $\left\{a_i\right\}_{i=1}^{m}$ for which $\left\{a_i x_i\right\}_{i=1}^{m}$ is a tight frame.
\end{definition}

Our Proposition \ref{TightFramesProp} immediately produces the following result:

\begin{corollary}
	If $\{v_i\}_{i=1}^k$ is a frame for $\R^n$ which contains a scalable sub-frame with $p $ elements; if $k-p<n$, then $\{v_i\}_{i=1}^k$ itself is not scalable.
\end{corollary}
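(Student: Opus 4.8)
The plan is to argue by contraposition, converting the statement into the form ``if $\{v_i\}_{i=1}^k$ is scalable and it contains a scalable sub-frame with $p$ elements, then $k-p\ge n$.'' So suppose $\{v_i\}_{i=1}^k$ is scalable: there are non-negative scalars $\{a_i\}_{i=1}^k$ with $\{a_i v_i\}_{i=1}^k$ a tight frame for $\R^n$. Suppose also that, after relabeling, $\{v_i\}_{i=k-p+1}^k$ is a scalable sub-frame: there are non-negative scalars $\{b_i\}_{i=k-p+1}^k$ with $\{b_i v_i\}_{i=k-p+1}^k$ a tight frame for $\R^n$.

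The key observation is that $\{a_i v_i\}_{i=1}^k$ splits as the union of the ``head'' family $\{a_i v_i\}_{i=1}^{k-p}$ and the ``tail'' family $\{a_i v_i\}_{i=k-p+1}^k$, and the tail is just a rescaling of the scalable sub-frame. Since $\{b_i v_i\}_{i=k-p+1}^k$ is a tight frame and scaling each vector by a positive constant (or allowing some scalars to be zero) keeps a finite family a tight frame only in special cases, I have to be slightly careful here: what I actually need is that $\{a_i v_i\}_{i=k-p+1}^k$ is itself a \emph{tight} frame. This is where I would invoke Proposition~\ref{TightFramesProp}: the full family $\{a_i v_i\}_{i=1}^k$ is tight, so by the forward direction of Proposition~\ref{TightFramesProp} applied with the appended block being the tail indices $k-p+1,\dots,k$, the tail $\{a_i v_i\}_{i=k-p+1}^k$ is a tight frame for $\R^n$ --- \emph{provided} the head $\{a_i v_i\}_{i=1}^{k-p}$ is also a tight frame. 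That proviso is not automatic, so the clean route is the other way: Proposition~\ref{TightFramesProp} says the whole is tight iff the appended block is tight; here we must append to \emph{something that is already tight}. So instead I would append the head to the tail: regard $\{a_i v_i\}_{i=k-p+1}^k$ as the starting family and $\{a_i v_i\}_{i=1}^{k-p}$ as the $k-p$ appended vectors. But for Proposition~\ref{TightFramesProp} to apply I need the starting family $\{a_i v_i\}_{i=k-p+1}^k$ to be tight, which is exactly what I am trying to get. The resolution: a tight frame for $\R^n$ must span $\R^n$, hence have at least $n$ elements; so if I can show the tail family spans $\R^n$ I am done with the counting, and tightness of the tail is not even needed.

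Here is the streamlined argument I would actually write. Since $\{b_i v_i\}_{i=k-p+1}^k$ is a tight frame for $\R^n$, in particular it is a frame, hence it spans $\R^n$; since the $b_i$ are scalars, $\{v_i\}_{i=k-p+1}^k$ spans $\R^n$, so $p\ge n$. That already gives $p\ge n$ but not quite $k-p\ge n$, so scalability of the \emph{full} frame must enter. Use it as follows: $\{a_i v_i\}_{i=1}^k$ is tight; discard the tail block and consider the head $\{a_i v_i\}_{i=1}^{k-p}$. Apply Proposition~\ref{TightFramesProp} with the tight frame being $\{a_i v_i\}_{i=1}^{k-p}$\,? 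No --- we do not know the head is tight. The correct application: we know the \emph{total} family $\{a_i v_i\}_{i=1}^{k}$ is tight and we know the tail $\{a_i v_i\}_{i=k-p+1}^k$ is a frame for $\R^n$ (it spans, being a rescaling of the spanning set $\{v_i\}_{i=k-p+1}^k$); but Proposition~\ref{TightFramesProp} requires the tail to be \emph{tight}, and we only know $\{b_iv_i\}$ (different scalars) is tight. So I would add the hypothesis check that $a_i>0$ on the tail is not guaranteed. Given these subtleties, the cleanest honest proof is: the sub-frame being scalable forces $p\ge n$ (spanning), and the whole frame being scalable forces, by an analogous spanning/tightness split via Proposition~\ref{TightFramesProp} applied after noting the \emph{complement} $\{a_iv_i\}_{i=1}^{k-p}$ must itself be tight because both $\{a_iv_i\}_{i=1}^k$ and a suitable tight rescaling of the tail are tight with a common frame operator multiple --- the algebraic identity $\sum_{i=1}^{k-p}|\langle v, a_iv_i\rangle|^2 = (\hat A - \tilde A)\|v\|^2$ from the proof of Proposition~\ref{TightFramesProp} forces the head to be a (possibly degenerate, i.e. zero-bound) tight frame, and if its bound is positive it spans $\R^n$ giving $k-p\ge n$.

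\textbf{Main obstacle.} The real difficulty --- and the step I expect to need the most care --- is the possibility that the ``head'' block $\{a_iv_i\}_{i=1}^{k-p}$ has frame bound $0$, i.e. all its vectors vanish or the block is empty, so that the tight-frame-splitting identity is vacuous and gives no lower bound on $k-p$. To handle this I would note that if $\{a_iv_i\}_{i=1}^k$ is tight and the tail alone already accounts for all of the frame operator (head bound $=0$), then the tail $\{a_iv_i\}_{i=k-p+1}^k$ is tight; it spans $\R^n$, so $p\ge n$, but this does \emph{not} yield $k-p\ge n$ unless we rule it out. This suggests the statement as phrased may implicitly require $k-p\ge 1$ to force a nonzero head, or the intended reading is that the head must be nonzero and hence a genuine frame. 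I would therefore, in the write-up, carry through the identity $\sum_{i=1}^{k-p}|\langle v, a_i v_i\rangle|^2 = (\hat A - A')\|v\|^2$ (with $\hat A$ the tight bound of the full rescaled frame and $A'$ that of the tail's tight rescaling, after reconciling the two scaling systems), conclude $\hat A > A'$ since the head is nonempty and not identically zero, hence the head is a tight frame with positive bound, hence spans $\R^n$, hence $k-p\ge n$, contradicting $k-p<n$. The reconciliation of the two independent scalings $\{a_i\}$ and $\{b_i\}$ on the tail is the one genuinely fiddly point; it works because on the span of a tight frame the frame operator is a scalar, so any two tight rescalings of the same vectors differ only by an overall constant, letting me use a single consistent scaling throughout.
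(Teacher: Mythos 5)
Your proposal correctly isolates the real difficulty --- the sub-frame is scalable via its own scalars $\{b_i\}$, which need not be related to the scalars $\{a_i\}$ that would make the full frame tight --- but the step you use to resolve it is false. Two tight rescalings of the same family need not differ by an overall constant: in $\R^2$ the family $\{e_1,e_2,e_1,e_2\}$ is made tight both by the weights $(1,1,1,1)$ and by $(\sqrt{3},1,1,\sqrt{3})$, which are not proportional. So you cannot transfer tightness from $\{b_iv_i\}_{i\in P}$ to $\{a_iv_i\}_{i\in P}$, and the application of Proposition \ref{TightFramesProp} on which your argument hinges never becomes available. The second obstacle you flag (the head block could be scaled to zero, since the definition permits $a_i=0$) is likewise real and not removable.

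In fact these obstacles defeat the statement as literally written, not merely your proof of it. In $\R^2$ take $\{v_1,v_2,v_3\}=\{e_1,e_2,e_1\}$: it contains the scalable sub-frame $\{e_1,e_2\}$ with $p=2$ and $k-p=1<2=n$, yet the weights $(1,\sqrt{2},1)$ give frame operator $2I$, so the full frame is scalable. The corollary is only ``immediate'' from Proposition \ref{TightFramesProp} under the stronger reading that the \emph{same} scalars $\{a_i\}$ witnessing scalability of the whole frame already make the $p$-element sub-family tight; then Proposition \ref{TightFramesProp} forces the complementary $k-p$ scaled vectors to form a tight frame for $\R^n$, hence to span, hence $k-p\ge n$ (provided they are not all scaled to zero). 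The paper supplies no argument beyond asserting immediacy, and your write-up, by honestly tracking where the deduction breaks, has effectively located a counterexample rather than a proof.
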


Although Proposition \ref{TightFramesProp} is from the perspective of adding vectors to a tight frame, we obtain corollaries that extend results regarding erasures. Frames have gained significant traction due to their resilience to noise in signal processing applications but also due to their ability to withstand transmission losses which present themselves mathematically as erasures of frame elements.

The topic of erasures has been addressed in the past, see \cite{GOYAL2001} and \cite{HOLMES2004}. More recently, the situation where there is a single erasure has been analyzed in \cite{Datta2020}. Via our corollaries, we provide improvements to some results in the aforementioned paper. 

\begin{corollary}
	If $p=1$ so that one vector is being added to a tight frame, the resulting frame will never be tight unless $n=1$ (in which case we are in $\R^1$ where every frame is tight).
\end{corollary}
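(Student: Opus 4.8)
The plan is to obtain this statement as an immediate consequence of Proposition \ref{TightFramesProp}. Taking $p=1$ there, the augmented family $\{v_i\}_{i=1}^{k+1}$ is a tight frame for $\R^n$ if and only if the singleton $\{v_{k+1}\}$ is a tight frame for $\R^n$. So the whole question collapses to asking: when can a single vector be a tight frame for $\R^n$?

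The next step is to rule this out whenever $n\ge 2$. In finite dimensions a family is a frame precisely when it spans the space, and $\mathrm{span}\{v_{k+1}\}$ has dimension at most $1$, which is strictly less than $n$. Concretely, choose any nonzero $x$ with $x\perp v_{k+1}$; then $\sum_i |\langle x, v_{k+1}\rangle|^2 = 0$, which cannot be bounded below by $A\|x\|^2$ for any $A>0$, so $\{v_{k+1}\}$ fails the lower frame inequality. Hence for $n\ge 2$ the singleton $\{v_{k+1}\}$ is not a frame at all, in particular not a tight frame, and by the equivalence above $\{v_i\}_{i=1}^{k+1}$ is not tight, regardless of which (nonzero) vector $v_{k+1}$ is appended.

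It remains to treat $n=1$, which is exactly the stated exception: in $\R^1$ the frame operator of any frame is a $1\times 1$ positive-definite matrix, hence a positive scalar, i.e. automatically a multiple of the identity, so every frame for $\R^1$ is tight and the statement holds. I do not expect a real obstacle in carrying this out; the only mild subtlety is that ``adding a vector'' should be understood as adding a \emph{nonzero} vector (consistent with the earlier remark in this section that appending zero vectors amounts to not appending at all), since otherwise the augmented family would be unchanged and would trivially remain tight.
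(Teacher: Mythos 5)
Your proof is correct and follows exactly the route the paper intends: the paper states this corollary without proof as an immediate consequence of Proposition \ref{TightFramesProp} together with the observation (made just before that proposition) that a tight frame for $\R^n$ must span $\R^n$, so a single vector cannot do the job when $n\ge 2$. Your remark about excluding the zero vector is a sensible (and slightly more careful) reading of the degenerate case that the paper glosses over.
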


With this corollary we can establish the same result as Theorem 2.3 in \cite{Datta2020} which states that if a vector is removed from a tight frame, the resulting set is no longer a tight frame. However, we can improve upon this with the following:

\begin{corollary}
	If the amount of vectors appended, $p$, is less than the dimension of the space, $n$, that is $p<n$, then the resulting frame is never tight.
\end{corollary}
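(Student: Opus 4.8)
The plan is to derive this as a direct consequence of Proposition \ref{TightFramesProp}. Suppose $\{v_i\}_{i=1}^k$ is a tight frame for $\R^n$, we append $p$ vectors $v_{k+1},\dots,v_{k+p}$ with $p<n$, and the resulting family $\{v_i\}_{i=1}^{k+p}$ is tight. By the forward direction of Proposition \ref{TightFramesProp}, the appended set $\{v_i\}_{i=k+1}^{k+p}$ must itself be a tight frame for $\R^n$.

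First I would note that any frame for $\R^n$ must be a spanning set: if $\{w_i\}_{i=1}^p$ fails to span $\R^n$, there is a nonzero $v\in\R^n$ orthogonal to every $w_i$, so $\sum_i|\langle v,w_i\rangle|^2 = 0$, violating the lower frame bound $A\|v\|^2>0$. Hence a frame for $\R^n$ has at least $n$ elements. Applying this to $\{v_i\}_{i=k+1}^{k+p}$, we would need $p\ge n$, contradicting the hypothesis $p<n$. Therefore the resulting frame cannot be tight.

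I would phrase the short argument explicitly rather than merely citing the discussion, since the spanning-set observation is the only extra ingredient. There is essentially no obstacle here: the work has already been done in Proposition \ref{TightFramesProp}, and the dimension count is elementary. The only point requiring a word of care is the degenerate case $p=0$, i.e. appending nothing, but that is excluded by "$p$ vectors appended" with $p<n$ being vacuous only when $n=1$; one could simply remark that for $p\ge 1$ and $p<n$ (so $n\ge 2$) the conclusion holds, which is exactly the content of the two preceding corollaries combined.

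\begin{proof}
Suppose, for contradiction, that $\{v_i\}_{i=1}^{k+p}$ is a tight frame for $\R^n$ with $1\le p<n$. By the forward direction of Proposition \ref{TightFramesProp}, the appended family $\{v_i\}_{i=k+1}^{k+p}$ is a tight frame for $\R^n$. Any frame for $\R^n$ must span $\R^n$: otherwise there is a nonzero $v$ orthogonal to all of its elements, forcing $\sum |\langle v, v_i\rangle|^2 = 0$ and contradicting the lower frame bound. Hence $\{v_i\}_{i=k+1}^{k+p}$ must contain at least $n$ vectors, i.e. $p\ge n$, contradicting $p<n$. Therefore the resulting frame is never tight.
\end{proof}
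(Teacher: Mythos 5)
Your proof is correct and follows exactly the route the paper intends: the corollary is stated without a separate proof precisely because it combines the forward direction of Proposition \ref{TightFramesProp} with the observation, already made in the text preceding that proposition, that a frame for $\R^n$ must span and hence contain at least $n$ vectors. Writing out the spanning argument explicitly, as you do, only makes the implicit reasoning visible.
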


From this corollary, we can deduce that if we start with a tight frame for $\R^n$ and $p$ vectors are removed, as long as $p<n$ the resulting set is never a tight frame. We see that erasures in numbers up to the dimension of the space never result in a tight frame. This raises the following question: Is it possible to have a certain amount of erasures to a tight frame so that the resulting set is still a tight frame? There is no answer to this question with what is present in \cite{Datta2020} or anywhere else as far as I know. However, we can provide one with the following:

\begin{corollary}
	If $p \geq n$ then the resulting frame is tight only when the set of vectors added make a tight frame themselves.
\end{corollary}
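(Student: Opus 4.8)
The plan is to deduce this final corollary directly from Proposition~\ref{TightFramesProp}, so the real content is already done; only the logical packaging remains. First I would restate the setup: we start with a tight frame $\{v_i\}_{i=1}^k$ for $\R^n$ with bound $A$, append $p \geq n$ vectors $v_{k+1},\dots,v_{k+p}$, and ask when $\{v_i\}_{i=1}^{k+p}$ is tight. By the forward direction of Proposition~\ref{TightFramesProp}, if the augmented family is tight with bound $\hat A$, then $\{v_i\}_{i=k+1}^{k+p}$ is a tight frame for $\R^n$ with bound $\hat A - A$; in particular $\hat A > A$ and the appended vectors must span $\R^n$, forcing $p \geq n$ (consistent with our hypothesis). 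Conversely, by the reverse direction of the same proposition, if $\{v_i\}_{i=k+1}^{k+p}$ is itself a tight frame for $\R^n$, then appending it to a tight frame yields a tight frame. This establishes the "if and only if" claimed by the corollary under the standing assumption $p \geq n$.

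The one genuinely new observation beyond Proposition~\ref{TightFramesProp} is that the hypothesis $p \geq n$ is \emph{necessary} for the appended vectors to have a chance of forming a tight frame: a tight frame for $\R^n$ must be a spanning set, hence contain at least $n$ vectors. So I would phrase the corollary as saying that when $p \geq n$ the two conditions are equivalent, and implicitly that when $p < n$ neither can hold (which is exactly the preceding corollary). I would make sure to note that the appended set forming a tight frame does \emph{not} require its bound to match $A$; the augmented bound is simply $\hat A = A + A'$ where $A'$ is the bound of the appended tight frame. This keeps the statement honest about what "tight" means for the sub-collection.

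In terms of carrying it out: (i) invoke the reverse direction of Proposition~\ref{TightFramesProp} for one implication, verbatim; (ii) invoke the forward direction for the other implication, observing that a tight frame for $\R^n$ necessarily spans, which is why the conclusion is only interesting once $p \geq n$; (iii) optionally remark that the value $\hat A - A$ computed in the proof of Proposition~\ref{TightFramesProp} is automatically positive, so the appended tight frame has a legitimate positive bound. There is essentially no obstacle here — the proof is a two-line citation of Proposition~\ref{TightFramesProp} — so the main thing to get right is the exposition: making clear that this corollary, together with the preceding one, gives a complete dichotomy (for $p < n$ never tight; for $p \geq n$ tight exactly when the new vectors are a tight frame), and thereby answers the erasure question raised just before it, namely that removing $p \geq n$ vectors from a tight frame leaves a tight frame precisely when the removed vectors themselves formed a tight frame.
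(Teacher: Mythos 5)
Your proposal is correct and is exactly the derivation the paper intends: the corollary is an immediate citation of both directions of Proposition \ref{TightFramesProp}, with the observation that a tight frame for $\R^n$ must span and hence have at least $n$ elements explaining the role of the hypothesis $p \geq n$. The paper offers no separate proof, treating the statement as immediate, so your two-line packaging matches its approach.
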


Thus we answer in the affirmative: a tight frame can encounter erasures so that the resulting set is still a tight frame, but that occurs if and only if the removed vectors form a tight frame themselves.

We have provided a complete characterization of a tight frame encountering erasures and whether the resulting set after the erasures is a tight frame or not:

\begin{corollary} \hfill
	\begin{itemize}
		\item If a tight frame for $\R^n$ encounters $p < n$ erasures, the resulting set is never a tight frame.
		\item If a tight frame for $\R^n$ encounters $p \geq n$ erasures, the resulting set is a tight frame if and only if the set of vectors removed form a tight frame themselves.
	\end{itemize}
\end{corollary}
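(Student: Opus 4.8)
The plan is to reduce both assertions to Proposition \ref{TightFramesProp} together with the elementary fact that a tight frame for $\R^n$ is a spanning set and hence has at least $n$ elements. Write the original tight frame as $\{v_i\}_{i=1}^{k+p}$ with frame bound $\widehat{A}$, and, after relabeling, suppose the $p$ erased vectors are $v_{k+1},\dots,v_{k+p}$, so that the set remaining after the erasures is $\{v_i\}_{i=1}^{k}$. An erasure is thus exactly the reverse operation of appending $p$ vectors to the (a priori unknown) set $\{v_i\}_{i=1}^{k}$.

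The first step is to record the two-sided form of the Parseval subtraction used in the proof of Proposition \ref{TightFramesProp}: for every $v\in\R^n$,
\[
\sum_{i=1}^{k}|\langle v,v_i\rangle|^2 \;=\; \widehat{A}\,\|v\|^2 \;-\; \sum_{i=k+1}^{k+p}|\langle v,v_i\rangle|^2 .
\]
From this identity one reads off that, for the tight frame $\{v_i\}_{i=1}^{k+p}$, the retained set $\{v_i\}_{i=1}^{k}$ is a tight frame if and only if the erased set $\{v_i\}_{i=k+1}^{k+p}$ is a tight frame: if $\{v_i\}_{i=k+1}^{k+p}$ is tight with bound $C$ then the left-hand side equals $(\widehat{A}-C)\|v\|^2$, and conversely if $\{v_i\}_{i=1}^{k}$ is tight with bound $A$ then $\sum_{i=k+1}^{k+p}|\langle v,v_i\rangle|^2=(\widehat{A}-A)\|v\|^2$. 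This is Proposition \ref{TightFramesProp} with the roles of the ``kept'' and ``appended'' sub-collections exchanged, and its proof is verbatim the same.

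Granting this equivalence, the two bullets follow quickly. For the first, if $p<n$ and $\{v_i\}_{i=1}^{k}$ were a tight frame, then $\{v_i\}_{i=k+1}^{k+p}$ would be a tight frame for $\R^n$, hence a spanning set, forcing $p\ge n$ --- a contradiction; so the resulting set is never tight. For the second, when $p\ge n$ there is no dimension obstruction, and the equivalence says precisely that the set $\{v_i\}_{i=1}^{k}$ left after the erasures is tight if and only if the removed block $\{v_i\}_{i=k+1}^{k+p}$ is itself a tight frame.

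I do not expect a genuine obstacle here: the argument is bookkeeping plus the spanning-set count. The only point that deserves a sentence of care is the degenerate case in which the erased vectors already account for the entire frame operator, i.e.\ $\widehat{A}=C$; this forces each retained $v_i$ to vanish, so the residual is the all-zero family, which is not a frame. It is harmless and standard to exclude this by assuming frame vectors are nonzero and that the erasures do not exhaust the frame, and with that caveat noted the ``if and only if'' in the second bullet is literally correct and the proof is complete.
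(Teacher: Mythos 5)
Your argument is correct and is essentially the route the paper takes: the corollary is obtained by reading Proposition \ref{TightFramesProp} with the roles of the kept and appended sub-collections exchanged, plus the observation that a tight frame for $\R^n$ must span and hence contain at least $n$ vectors. Your explicit caveat about the degenerate case $\widehat{A}=C$ (where the retained vectors are all forced to be zero) is a point the paper passes over silently, and it is worth the sentence you give it.
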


The condition number of a frame corresponds to the classical condition number of the frame operator matrix which is a measure of the effect of perturbations on the inverse.
A low condition number is desired for purposes of numerical stability in the reconstruction formula which makes use of the inverse of the frame operator matrix. Changes in the condition number after erasures is of interest and \cite{Datta2020} states in Proposition 2.5 that for a unit norm tight frame the condition number always increases after a single erasure. But we can now extend Proposition 2.5 and drop the requirement that the tight frame must be unit norm:

\begin{prop}
	The condition number of a tight frame always increases after a single erasure
\end{prop}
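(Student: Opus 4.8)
The plan is to work directly with the frame operator and compute how its spectrum changes under a single erasure. Since the original family $\{v_i\}_{i=1}^k$ is tight with bound $A$, its frame operator is $S=\sum_{i=1}^{k}v_iv_i^{*}=A I_n$. We may assume the erased vector, which after relabeling we call $v_k$, is nonzero (erasing a zero vector changes nothing and is not a genuine erasure), and we assume $n\ge 2$ — in $\R^1$ every family containing a nonzero vector is trivially tight, the same exception already noted for the preceding corollaries. The frame operator of the erased family is then $S_1=\sum_{i=1}^{k-1}v_iv_i^{*}=A I_n-v_kv_k^{*}$, a rank-one perturbation of $A I_n$. The operator $v_kv_k^{*}\colon x\mapsto\langle x,v_k\rangle v_k$ is positive semidefinite with eigenvalue $\|v_k\|^2$ on $\mathrm{span}(v_k)$ and eigenvalue $0$ on the $(n-1)$-dimensional subspace $v_k^{\perp}$; hence $S_1$ acts as the scalar $A-\|v_k\|^2$ on $\mathrm{span}(v_k)$ and as the scalar $A$ on $v_k^{\perp}$. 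So the eigenvalues of $S_1$ are $A-\|v_k\|^2$ together with $A$ repeated $n-1$ times.

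Next I would record the elementary bound $\|v_k\|^2\le A$: expanding $\langle Sv_k,v_k\rangle$ two ways gives $A\|v_k\|^2=\sum_{i=1}^{k}|\langle v_k,v_i\rangle|^2\ge|\langle v_k,v_k\rangle|^2=\|v_k\|^4$, so $\|v_k\|^2\le A$. If $\|v_k\|^2=A$, then $0$ is an eigenvalue of $S_1$, so $S_1$ is singular and $\{v_i\}_{i=1}^{k-1}$ is not a frame for $\R^n$ (indeed equality above forces $v_i\perp v_k$ for all $i\ne k$, so the remaining vectors lie in the proper subspace $v_k^{\perp}$); its condition number is then infinite, in particular strictly larger than the condition number $1$ of the original tight frame. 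If instead $0<\|v_k\|^2<A$, all eigenvalues of $S_1$ are positive, so $\{v_i\}_{i=1}^{k-1}$ is a frame with optimal bounds $\lambda_{\min}(S_1)=A-\|v_k\|^2$ and $\lambda_{\max}(S_1)=A$, whence its condition number equals $\dfrac{A}{A-\|v_k\|^2}>1$. In every case the condition number strictly increases, which proves the proposition.

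I do not expect a genuine obstacle: the spectrum of a rank-one perturbation of a multiple of the identity is immediate, and the inequality $\|v_k\|^2\le A$ is standard. The only points requiring care are the degenerate situations, which should be disposed of at the outset — a zero erased vector (excluded as not a meaningful erasure), the one-dimensional space $\R^1$ (excluded, matching the statement of the nearby corollaries), and the boundary case $\|v_k\|^2=A$, where one leaves the realm of invertible frame operators and must invoke the convention that the condition number of a non-frame is infinite. With those conventions fixed, the cases $0<\|v_k\|^2<A$ and $\|v_k\|^2=A$ are exhaustive and the argument is complete.
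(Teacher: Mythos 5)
Your proof is correct, but it takes a different and more quantitative route than the paper's. The paper disposes of the proposition in one line: by its earlier corollary, removing $p<n$ vectors (in particular $p=1$, $n\ge 2$) from a tight frame never leaves a tight frame, so the condition number goes from exactly $1$ to something strictly greater than $1$. You instead compute the spectrum of the reduced frame operator directly: writing $S_1=AI_n-v_kv_k^{*}$ as a rank-one perturbation of $AI_n$, you get eigenvalues $A$ (multiplicity $n-1$) and $A-\|v_k\|^2$, prove the a priori bound $\|v_k\|^2\le A$, and conclude the new condition number is $A/(A-\|v_k\|^2)>1$ (or $\infty$ in the degenerate case $\|v_k\|^2=A$). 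What your approach buys is an explicit formula for the post-erasure condition number in terms of the norm of the erased vector --- which in fact answers, for a single erasure, the question the paper raises immediately after this proposition and attributes to Proposition 2.10 of the Datta--Datta paper. What the paper's approach buys is brevity and independence from any spectral computation. Your careful handling of the edge cases ($v_k=0$, $n=1$, and $\|v_k\|^2=A$) is more explicit than anything in the paper; note that the zero-vector caveat is genuinely needed, since erasing a zero vector leaves the condition number equal to $1$, so the proposition as literally stated requires that convention.
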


\begin{proof}
	A single erasure on a tight frame, which has condition number 1, leaves the resulting set not being a tight frame with a condition number greater than 1.
\end{proof}

 This raises an important question: what is the new condition number of a tight frame that has had some frame elements removed? Proposition 2.10 in \cite{Datta2020} establishes a worst case and best case of the new condition number after a single erasure but it is worthwhile to explore this problem further in more generality.

\section{Open problems and Remarks}
The representation formula \eqref{repr-formula}  requires the inversion of the frame operator matrix. The optimal scenario from a computational perspective is encountered with tight frames in which the frame operator is a multiple of the identity and hence the easiest to invert. The next best case would be having a diagonal frame operator as they are also easy to invert and thus offer benefits similar to tight frames. In \cite{Datta2017} the following question is posed as a possible research direction: Given a frame $\left\{ v_i \right\}$, what are the operators $M$ for which $\left\{M v_i\right\}$ is a frame whose frame operator is a diagonal matrix? This question can also be approached from the perspective of additive perturbations: Given a frame $\left\{ v_i \right\}$, what are the perturbations $\left\{ d_i \right\}$ for which $\left\{v_i + d_i\right\}$ is a frame whose frame operator is a diagonal matrix?

We show a simple example of a perturbation that transforms a frame in $\R^2$ into a frame with a diagonal frame operator. 

We denote with   $e_1=(1, 0), e_2=(0,1)$ the vectors of the canonical basis of $\R^2$.

\begin{prop}\label{prop:diagR2}
 If $\{v_j\}_{j=1}^k$ is a nonzero frame in $\R^2$, then there exists $\epsilon\in \R$ such that the frame operator of $\{v_j\}_{j=1}^{k-1}\cup \{v_k+\epsilon e_j\}$ is diagonal for some $j\in\{1, 2\}$. 
\end{prop}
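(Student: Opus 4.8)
Represent vectors in $\R^2$ as columns, so that the frame operator of $\{v_j\}_{j=1}^{k}$ is the symmetric matrix $S=\sum_{j=1}^{k}v_jv_j^{T}$. A symmetric $2\times 2$ matrix is diagonal exactly when its $(1,2)$-entry vanishes, and here that entry is $S_{12}=\sum_{j=1}^{k}v_{j,1}v_{j,2}$. So the whole problem reduces to annihilating a single scalar using the one free real parameter $\epsilon$.

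The plan is to compute the effect of the perturbation on the frame operator and then solve a linear equation. Replacing $v_k$ by $v_k+\epsilon e_i$ (for $i\in\{1,2\}$) changes the frame operator to
\[
S'=S+\epsilon\bigl(e_iv_k^{T}+v_ke_i^{T}\bigr)+\epsilon^{2}e_ie_i^{T}.
\]
Since $e_ie_i^{T}$ is diagonal, the $(1,2)$-entry of $S'$ equals $S_{12}+\epsilon\,v_{k,2}$ when $i=1$ and $S_{12}+\epsilon\,v_{k,1}$ when $i=2$. Because the frame is nonzero we have $v_k\neq 0$, hence at least one of $v_{k,1},v_{k,2}$ is nonzero; choosing the corresponding index $i$ and setting $\epsilon=-S_{12}/v_{k,2}$ (respectively $\epsilon=-S_{12}/v_{k,1}$) forces the off-diagonal entry of $S'$ to be $0$, so $S'$ is diagonal.

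The remaining — and only delicate — step is to confirm that $\{v_1,\dots,v_{k-1},v_k+\epsilon e_i\}$ is still a frame, so that "the frame operator" is meaningful and equals this diagonal $S'$. As a sum of rank-one positive semidefinite matrices, $S'$ is positive semidefinite, and it is positive definite if and only if the perturbed vectors span $\R^2$. Spanning can fail only if $v_1,\dots,v_{k-1}$ all lie on a line $\ell$ through the origin with $v_k+\epsilon e_i\in\ell$ as well; but since the original frame spans $\R^2$ we know $v_k\notin\ell$, and a short computation using the explicit value of $\epsilon$ shows the chosen perturbation does not move $v_k$ onto $\ell$. When both coordinates of $v_k$ are nonzero there is a genuine choice between $e_1$ and $e_2$, and one checks that at least one of the two keeps the vector off $\ell$; when $v_k$ has a single nonzero coordinate the index $i$ is forced, but then $\ell$ cannot be the offending coordinate axis (otherwise $v_k\in\ell$), so the forced choice still works. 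This is precisely where the hypothesis that we start from a \emph{nonzero}, hence spanning, frame is used.

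I expect the matrix identity for $S'$ and the one-line solve for $\epsilon$ to be entirely routine. The main obstacle is the positive-definiteness verification in the degenerate configuration where the first $k-1$ vectors are collinear: there one cannot pick $e_1$ or $e_2$ arbitrarily, and must argue carefully that the admissible choice of basis vector keeps the perturbed $v_k$ off the spanning line of the others.
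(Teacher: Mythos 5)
Your core computation is the same as the paper's: the perturbation changes only the $(1,2)$-entry of $S$ linearly in $\epsilon$ (up to the diagonal $\epsilon^2 e_ie_i^T$ term), so one solves $S_{12}+\epsilon v_{k,2}=0$ or $S_{12}+\epsilon v_{k,1}=0$. But there is a genuine gap in the sentence ``Because the frame is nonzero we have $v_k\neq 0$.'' That does not follow: a frame in $\R^2$ must contain at least two nonzero vectors, but the \emph{last} vector can perfectly well be zero. For $\{(1,1),(1,-2),(0,0)\}$ one has $S_{12}=-1$, and perturbing $v_3=(0,0)$ by $\epsilon e_i$ leaves the off-diagonal entry equal to $-1$ for every $\epsilon$ and every $i$, so your construction fails outright. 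The paper avoids this by first selecting the entry of the synthesis matrix with largest absolute value (nonzero because the frame spans $\R^2$) and \emph{reordering} the frame so that the vector containing it sits in the last position; the division is then legitimate. Your proof needs that reordering step (or at least the choice of \emph{some} index $j_0$ with $v_{j_0}\neq 0$ to perturb); as written it proves a false statement about the literal last vector.

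A secondary point: your third paragraph, verifying that the perturbed family is still a frame, goes beyond what the paper proves --- the paper only checks that $T'(T')^*$ is diagonal and says nothing about the perturbed set spanning $\R^2$ (indeed, for $\{(1,0),(1,1)\}$ one admissible choice in the paper's own construction yields $\{(1,0),(1,0)\}$, which is not a frame but has diagonal ``frame operator'' $\mathrm{diag}(2,0)$). Your instinct that this deserves attention is reasonable, but your argument for it is only a sketch: ``a short computation shows the perturbation does not move $v_k$ onto $\ell$'' and ``one checks that at least one of the two keeps the vector off $\ell$'' are precisely the claims that need proof, and the reason you give in the single-nonzero-coordinate case (that $\ell$ cannot be the offending coordinate axis) does not by itself rule out the perturbed vector landing on a different line $\ell$. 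These claims are in fact provable by a direct computation with $\ell=\mathrm{span}(a,b)$, but as written that part is incomplete; if you only want what the proposition literally asserts, you can drop it, and if you want the stronger statement you must supply the computation.
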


\begin{proof}
	Let $T=\begin{bmatrix}
		u_1 \\ u_2
	\end{bmatrix}$ be the synthesis operator for $\{v_j\}_{j=1}^k$ 
Here  $u_1, u_2$ are the rows of the matrix $T $.
    Let $v_s(i)$ be the entry in $T$ with largest absolute value i.e. maximum $|v_j(i)|$. Reorder the frame elements where this is the last vector $v_k$.
    	
    	Let $$\epsilon= -\frac{\langle u_1, u_2\rangle}{v_k(i)} $$
    and let  $T^\prime =\begin{bmatrix}
        u_1^\prime \\ u_2^\prime
    \end{bmatrix}$ be the synthesis operator for $\{v_j\}_{j=1}^{k-1}\cup \{v_k+\epsilon e_i\}$. Thus, \begin{align*}
        \langle u_1^\prime, u_2^\prime \rangle &= \langle u_1, u_2\rangle + \epsilon v_k(i)=0.
    \end{align*}

    \noindent Hence, the frame operator $T^\prime(T^\prime)^*$ is diagonal.
\end{proof}

With this generalization, we can rephrase one of the questions that we posed in the introduction regarding frame stability in terms of diagonal frames:

\begin{enumerate}
    \item[(2*)] How much does a frame need to be perturbed to become a frame with a diagonal frame operator? 
\end{enumerate}

\subsection{Infinite Dimensional Frames}
To conclude this paper, we give some remarks about extending our main results to infinite dimensional Hilbert spaces.

The construction given in Theorem \ref{thm: betterFrame} relies on the following fact in finite dimensions: $$\max_{j \in \{1,...,k\}} \norm{S^{-1}v_j}<\infty.$$ In other words, the canonical dual frame is always bounded. However, in infinite dimensional spaces, it is not immediately obvious whether this is true. 

The construction given in Theorem \ref{thm: tightAlg} needs the frame operator to have finitely many eigenvalues. In infinite dimensions, the theorem holds for frames with frame operators having finite spectrum. However, most frames operators do not have this property, so more powerful tools will need to be used.

As mentioned in the introduction, a corollary to the proof of the Krein-Milman-Rutman theorem forbids an infinite dimension analog to Proposition \ref{thm: StabilitywithPW}. We leave as an open problem to characterize the  additive perturbations  $\{\epsilon_n\}_{n \in \N}$ in the Krein-Milman-Rutman theorem   that preserve frames. 

\medskip
We state a simple convexity property of perturbed frames which could allude to thinking of frames topologically:

\begin{prop}\label{prop:convexExtension}
    Suppose $\{x_n\}_{n \in \N}$ is a frame in $\mathcal{H}$ with frame bounds $A,B$. Moreover, suppose $\{y_n\}_{n \in \N}$ is a sequence in $\mathbb{H}$ such that $\{x_n\}_{n \in \N}$ and $\{y_n\}_{n \in \N}$ satisfy \eqref{eq: PWIneq} with $\lambda=0$ and $0\le \mu<\sqrt{A}$. Let $\tau < \frac{\sqrt{A}}{\mu}$. Then, for every sequence of complex numbers $\{t_n\}_{n \in \N}$ with $|t_n|\le \tau$, $$\left\{(1-t_n)x_n+t_ny_n\right\}_{n \in \N}$$ is a frame.
\end{prop}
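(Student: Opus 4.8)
The plan is to reduce this directly to Christensen's Paley–Wiener theorem (Theorem \ref{thm: Paley-Wiener}) by choosing the right comparison sequence and rescaling the coefficients. Set $g_n = (1-t_n)x_n + t_n y_n$, so that the candidate frame is $\{g_n\}_{n\in\N}$. The first step is the trivial but crucial algebraic identity
\[
x_n - g_n = x_n - \big((1-t_n)x_n + t_n y_n\big) = t_n(x_n - y_n),
\]
which says that perturbing $x_n$ toward $y_n$ by the amount $t_n$ is the same as scaling the original discrepancy $x_n - y_n$ by $t_n$.

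Next I would verify that $\{x_n\}$ and $\{g_n\}$ satisfy the hypothesis \eqref{eq: PWIneq} of Theorem \ref{thm: Paley-Wiener} with $\lambda = 0$ and with $\mu$ replaced by $\mu\tau$. Indeed, fix a finite sequence $\{c_n\}_{n\in\N}$; then $\{c_n t_n\}_{n\in\N}$ is again a finite sequence, so applying the assumed inequality (the one \eqref{eq: PWIneq} with $\lambda=0$ satisfied by $\{x_n\}$ and $\{y_n\}$) to the coefficients $c_n t_n$ gives
\[
\Big\| \sum_{n\in\N} c_n (x_n - g_n) \Big\|
= \Big\| \sum_{n\in\N} (c_n t_n)(x_n - y_n) \Big\|
\le \mu \Big( \sum_{n\in\N} |c_n t_n|^2 \Big)^{1/2}
\le \mu\tau \Big( \sum_{n\in\N} |c_n|^2 \Big)^{1/2},
\]
using $|t_n| \le \tau$ in the last inequality. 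Thus \eqref{eq: PWIneq} holds for $\{x_n\}$ and $\{g_n\}$ with constants $\lambda = 0$ and $\mu\tau$.

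Finally, I would check the admissibility condition of Theorem \ref{thm: Paley-Wiener}, namely $\lambda + \frac{\mu\tau}{\sqrt A} < 1$: with $\lambda = 0$ this is exactly $\tau < \frac{\sqrt A}{\mu}$ when $\mu>0$, which is the standing hypothesis, and it is automatic when $\mu = 0$ (in which case in fact $x_n = y_n$ for all $n$ and $g_n = x_n$). Theorem \ref{thm: Paley-Wiener} then yields that $\{g_n\}_{n\in\N} = \{(1-t_n)x_n + t_n y_n\}_{n\in\N}$ is a frame, with explicit bounds $A\big(1 - \tfrac{\mu\tau}{\sqrt A}\big)^2$ and $B\big(1 + \tfrac{\mu\tau}{\sqrt B}\big)^2$ if one wants them. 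There is essentially no obstacle here beyond bookkeeping; the only minor point to be careful about is that substituting $c_n t_n$ for $c_n$ keeps the sequence finitely supported, so the hypothesis genuinely applies, and that one should either treat $\mu=0$ separately or simply note the admissibility inequality degenerates harmlessly.
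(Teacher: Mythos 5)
Your proposal is correct and follows essentially the same route as the paper's own proof: rewrite $x_n - g_n = t_n(x_n - y_n)$, apply the assumed inequality \eqref{eq: PWIneq} to the rescaled finite sequence $\{c_n t_n\}$ to obtain the constant $\mu\tau$, and invoke Theorem \ref{thm: Paley-Wiener} since $\mu\tau/\sqrt{A} < 1$. Your explicit treatment of the $\mu = 0$ degenerate case is a small, harmless addition beyond what the paper records.
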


\begin{proof}
    We apply Theorem \ref{thm: Paley-Wiener}. Fix $\{c_n\}_{n=1}^N$ in $\C$. Note $\sup_{n \in \N}|t_n|\le \tau$. Then, \begin{align*}
        \norm{\sum_{n \in \N} c_n(x_n - (1-t_n)x_n-t_ny_n) } &=\norm{\sum_{n \in \N}c_nt_n(x_n-y_n)}\\
        &\le \mu \left(\sum_{n \in \N} |c_nt_n|^2\right)^{\frac{1}{2}}\\
        &\le \mu \tau  \left(\sum_{n \in \N} |c_n|^2\right)^{\frac{1}{2}}.
    \end{align*}

    Since $\frac{\mu\tau}{\sqrt{A}}<1$, we are done.
\end{proof}

\begin{remark}
    Observe $\frac{\sqrt{A}}{\mu}>1$ so $\tau =1$ always satisfies Proposition \ref{prop:convexExtension}.
\end{remark}

\bibliographystyle{plain}
\bibliography{Main}

\end{document}